\documentclass[10pt,a4paper]{article}
\usepackage{cite}
\usepackage{bm}
\usepackage{amsmath}
\makeatletter
\def\maketag@@@#1{\hbox{\m@th\normalfont\normalsize#1}}
\makeatother
\usepackage{amssymb,amsfonts}

\usepackage{mathtools}
\usepackage{algorithm}
\usepackage{algcompatible}
\usepackage{graphicx}
\usepackage{textcomp}
\usepackage[hidelinks]{hyperref}
\usepackage{subcaption}
\usepackage{pgfplots}
\usepackage{tikz}
\usetikzlibrary{calc,patterns,
	decorations.pathmorphing,
	decorations.markings}
\pgfplotsset{compat=newest}
\newcommand{\argmin}{\mathop{\rm arg\ min}\nolimits}
\newcommand{\blkdiag}{\mathop{\rm blockdiag}\nolimits}
\newcommand{\Jac}{\bm{\text{J}}}
\newcommand{\la}{linear algebra }
\newcommand{\nz}{nonzero }
\newcommand{\nzs}{nonzeros }
\usepackage[bottom]{footmisc}
\newcommand\blfootnote[1]{%
	\begingroup
	\renewcommand\thefootnote{}\footnote{#1}%
	\addtocounter{footnote}{-1}%
	\endgroup
}

\newenvironment{proof}{\textbf{Proof:}}{\hfill$\blacksquare$\\}
\newtheorem{proposition}{Proposition}
\newtheorem{remark}{Remark}
\newtheorem{theorem}{Theorem}
\newtheorem{lemma}{Lemma}
\newtheorem{corollary}{Corollary}
\newtheorem{definition}{Definition}
\algnewcommand\INPUT{\item[\textbf{Inputs:}]}%
\algnewcommand\OUTPUT{\item[\textbf{Outputs:}]}%
\newcommand{\algrule}[1][.2pt]{\par\vskip.5\baselineskip\hrule height #1\par\vskip.5\baselineskip}
\title{An efficient bounded-variable nonlinear least-squares algorithm for embedded MPC}

\author{
	Nilay~Saraf ~and~
	Alberto~Bemporad
}
\begin{document}

\maketitle
\blfootnote{This project had received funding from the European Union's Horizon 2020 Framework Programme for Research and Innovation under grant agreement No.\ 674875 (oCPS). 
	
N.\ Saraf (e-mail: nilay.saraf@alumni.imtlucca.it) and

A.\ Bemporad (e-mail: alberto.bemporad@imtlucca.it) are with the IMT School for Advanced Studies Lucca, Piazza San Francesco 19, Lucca, 55100 LU Italy .}	
\begin{abstract}
	This paper presents a new approach to solve linear and nonlinear model predictive control (MPC) problems that requires small memory footprint and throughput and is particularly suitable
	when the model and/or controller parameters change at runtime. 
	Typically MPC requires two phases: 1) construct an optimization
	problem based on the given MPC parameters (prediction model, tuning weights,  prediction horizon, and constraints), which results in a quadratic or nonlinear programming problem, and then 2) call an optimization algorithm to solve the resulting problem. In the proposed approach the problem construction step is systematically eliminated, as in the optimization algorithm problem matrices are expressed in terms of abstract functions of the MPC parameters. 
	We present a unifying algorithmic framework based on active-set
	methods with bounded variables that can cope with linear, nonlinear, and adaptive MPC variants based on a broad class of prediction models and a sum-of-squares cost function. The theoretical and numerical results demonstrate the potential, applicability, and efficiency of the proposed framework for practical real-time embedded MPC.
\end{abstract}

\noindent \textbf{Keywords: Model predictive control, active-set methods, nonlinear parameter-varying control,  adaptive control, nonlinear programming, sparse recursive QR factorization.}
\section{Introduction}
\label{sec:introduction}
Model predictive control has evolved over the years from a method developed for controlling slow processes~\cite{RS68,mpcApps} to an advanced multivariable control method that is applicable even to fast-sampling applications, such as in the automotive and aerospace domains~\cite{BBLV18,AnAapplications}. This evolution has been
possible because of the significant amount of research on computationally efficient real-time MPC algorithms. For an incomplete list of such efforts and tools the reader is referred to~\cite{effNMPCdiehl,qpnmpcdiehl,panoc,cannon,ohtsuka,RTI,fastMPConlineOpt,MHzMPC}.  Despite the success of MPC, demand for faster numerical algorithms for a wider scope of applications has been reported for instance in~\cite{AnAapplications}. A common approach to reduce computational load is to solve the MPC problem suboptimally, see for instance~\cite{RTI,fastMPConlineOpt}. However, even such MPC approaches have limitations that could be prohibitive in some resource-constrained applications, especially in the case of (parameter-varying) nonlinear MPC (NMPC). This denotes that there is still a large scope of improvement. 

A usual practice in MPC is to first formulate an optimization problem based on the prediction model and MPC tuning parameters, before passing it in a standard form to an optimization solver. 
Such a problem construction step can be performed \textit{offline} when the prediction model is time-invariant, such as linear time-invariant (LTI)  model of the system, whereas it needs to be repeated at each instance in case of parameter-varying models, such as nonlinear models linearized at the current operating point, or changes of MPC tuning parameters (such as prediction horizon, control horizon, tuning weights, or sampling time). 
Often, constructing the optimization problem requires a computation effort comparable to that required for solving the optimization problem itself. 
The same occurs in the recently proposed data-driven MPC scheme~\cite{ddmpc} where due to potentially time-varying model and/or tuning parameters, re-constructing the MPC optimization problem on line becomes necessary, which significantly increases the computational load. Notwithstanding these limitations of MPC, scarcely any effort has been made till date to design a real-time MPC approach which does not need (re-)construction of the optimization problem with varying model and/or MPC tuning parameters. Approaches which partially address this aspect for a limited class of linear parameter-varying (LPV) models with a fixed MPC problem structure include~\cite{rebuildFreeLPVMPC,fastMPConlineOpt}.

The methods proposed in this paper aim at reducing the computational complexity of MPC while eliminating the optimization problem construction step even for the general case of nonlinear parameter-varying systems, through algorithms that can adapt to changes in the model and/or tuning parameters at runtime. The main ideas employed for this purpose are: 1)
a structured and sparse formulation of the MPC problem through a quadratic penalty function in order to exploit simple and fast solution methods, 2) replacing matrix instances via abstract operators that map the model and tuning parameters to the result of the required matrix operations in the optimization algorithm. Besides this, the contributions of this paper include: 1) an overview on the relation between quadratic penalty and augmented Lagrangian methods w.r.t.\ their application for equality constraint elimination in the considered (nonlinear) MPC problems, 2) a discussion on alternative methods to implement MPC with resulting solution algorithms having negligible increase in memory requirement w.r.t.\ the number of decision variables, 3) methods to exploit problem
sparsity and efficiently implement the active-set method proposed in~\cite{bvlsTN} for MPC based on box-constrained (nonlinear) least-squares. 

Regarding the last contribution, we note that each iteration of a primal active-set method~\cite{Nocedal} involves the solution of a linear system, which in the case of the algorithm in~\cite{bvlsTN} is a sparse unconstrained linear least-squares (LS) problem. These LS problems between successive iterations are related by a rank-one update. In~\cite{bvlsTN}, it has been shown for the numerically \textit{dense} case that, as compared to solving an LS problem from scratch, employing a recursive QR factorization scheme that exploits the relation between successive LS problems can significantly increase computational speed without compromising numerical robustness, even without using advanced \la libraries. In the \textit{sparse} case, even though very efficient approaches exist for solving a single LS problem using direct~\cite{colamd} or iterative~\cite{SparseMats} methods with sparse linear algebra libraries, to the best of the authors' knowledge no methods have been reported for recursively updating the sparse QR factorization of a matrix. A recursive approach for sparse LU factorization has been described in~\cite{recLU}; however, such an approach not only needs the storage of the matrix and its sparsity pattern, which requires constructing the MPC problem and forming the normal equations that could be numerically susceptible to ill-conditioning, but it also relies on \la packages that could be cumbersome to code, especially in an embedded control platform. In this paper, we present novel methods for numerically stable sparse recursive QR factorization based on Gram-Schmidt orthogonalization, which are easy to implement and are very efficient even for small-size problems, therefore extending the dense approach of~\cite{bvlsTN}. Although the proposed methods are designed for the specific MPC application, i.e., to solve the sparse LS problems having a specific parameter-dependent structure without forming the matrix that is factorized, they may be applicable for other LS problems with block-sparse matrices having similar special structures. 

The paper is organized as follows. Section~\ref{sec:prelim} describes the considered general class of discrete-time models and MPC problem formulation. The proposed non-condensed formulation based on eliminating the equality constraints due to the model equations is motivated and described in detail in Section~\ref{sec:elimeq}. In Section~\ref{sec:solvers} we describe a solution algorithm for bound-constrained nonlinear least-squares optimization with a theoretical analysis on its global convergence. A parameterized implementation of this algorithm for solving MPC problems without the construction phase and relying on the abstraction of matrix instances is described in Section~\ref{sec:abstractmat}. Methods for sparse recursive thin QR factorization are described in Section~\ref{sec:sparseQR}. Section~\ref{sec:results} briefly reports numerical results based on a nonlinear MPC (NMPC) benchmark example
that clearly demonstrate the very good computational performance of the proposed methods against other methods. Finally,
Section~\ref{sec:conclusions} concludes the paper.

Excerpts of Sections~\ref{sec:prelim}-\ref{sec:elimeq}, and Section~\ref{sec:bvnlls} are based on the authors' conference papers~\cite{bvlsmpc,nmpcbvls}. These papers introduced the idea to formulate the (N)MPC problem using a quadratic penalty function for a fast solution using bounded-variable (nonlinear) least squares with numerically dense computations. All the other ideas that we have introduced above are proposed in this paper and are original. In addition, we include corrections and extensions of the contents in common with~\cite{bvlsmpc,nmpcbvls}. 
\subsection*{Basic notation} \noindent
We denote the set of real vectors of dimension $n$ as $\mathbb{R}^n$; a real matrix with $m$ rows and $n$ columns as $A\in\mathbb{R}^{m\times n}$; its transpose as $A^{\top}$, its inverse as $A^{-1}$, and its pseudo-inverse as $A^{\dagger}$. For a vector $a\in\mathbb{R}^{m}$, its $p$-norm is $\Vert a\Vert_p$, its $j{\text{th}}$ element is $a(j)$, and $\Vert a\Vert_2^2=a^\top a$. A vector or matrix with all zero elements is represented by $\bm{0}$. If $\mathcal{F}$ denotes a set of indices, $A_{\mathcal{F}}$ denotes a matrix formed from columns of $A$ corresponding to the indices in $\mathcal{F}$. Given $N$ square matrices $A_1,\ldots,A_N$, of possible different orders, $\blkdiag(A_1,\ldots,A_N)$ is the block diagonal matrix
whose diagonal blocks are $A_1,\ldots,A_N$.

For scalars $a$ and $b$, $\min(a, b)$ and $\max(a, b)$ denote, respectively, the minimum and maximum of the two values. 
Depending on the context, $(a, b]$ or $[b,a)$ represent either the set of real numbers or integers between $a$ and $b$, excluding $a$ and including $b$, or vice-versa.

The gradient of a function $f:\mathbb{R}^n\to\mathbb{R}$ at a point $\bar x\in\mathbb{R}^n$ is either denoted by $\left.\nabla_x f(x)\right\vert_{\bar x}$ or $\nabla _x f(\bar x)$, the Hessian matrix by $\nabla_x^2 f(\bar x)$; the Jacobian 
of a vector function $g:\mathbb{R}^n\to\mathbb{R}^m$
by  $\left.\Jac_x g(x)\right\vert_{\bar x}$ or $\Jac g(\bar x)$. 

Finite sets of elements are represented 
by curly braces containing the elements; $\emptyset$ denotes the empty set. If a set $\mathcal A$ is a subset of set $\mathcal B$ (i.e., if $\mathcal B$ is the superset of $\mathcal A$), it is written as $\mathcal{A}\subseteq \mathcal{B}$ (or alternatively $\mathcal{B}\supseteq \mathcal{A}$). The symbols $\cup, \cap$, and $\backslash$ between two sets denote, respectively, set union, intersection, and difference. The summation notation for sets is denoted by $\bigcup$. 
The cardinality (number of elements) of a finite set $\mathcal{A}$ is denoted by $\vert \mathcal{A} \vert$.

\section{Preliminaries}\label{sec:prelim}
For maximum generality, the prediction model we use in MPC is described by the following discrete-time multivariable nonlinear parameter-varying dynamical model equation
\begin{equation} \label{eq:model}
	\mathcal{M}(Y_k,~U_k,~S_k) = \bm{0},
\end{equation}
where $U_k = (u_{k-n_{\mathrm{b}}},\ldots,u_{k-1})$ with $u_k\in \mathbb{R}^{n_u}$
the input vector at sampled time step $k$, and $Y_k = (y_{k-n_{\mathrm{a}}},\ldots,y_k)$ with  $y_k\in \mathbb{R}^{n_y}$ the output vector at $k$. Vector $S_k = (s_{k-n_{\mathrm{c}}},\ldots,s_{k-1})$, where $s_k\in \mathbb{R}^{n_s}$, $n_s\geq 0$, contains possible exogenous signals, such as measured disturbances. 

We assume that function $\mathcal{M}:\mathbb{R}^{n_{\mathrm{a}} n_y} \times \mathbb{R}^{n_{\mathrm{b}} n_u} \times \mathbb{R}^{n_{\mathrm{c}} n_s} \to \mathbb{R}^{n_y}$ is differentiable, where $n_{\mathrm{a}}, n_{\mathrm{b}}$ and $n_{\mathrm{c}}$ denote the model order. Special cases include deterministic nonlinear parameter-varying auto-regressive exogenous (NLPV-ARX) models, state-space models ($y$ = state vector, $n_a=n_b=n_c=1$), neural networks with a smooth activation function, discretized first-principles models and differential algebraic equations. Designing the MPC controller based on the input-output (I/O) difference equation~\eqref{eq:model} has several benefits such as: 1) data-based black-box models which are often identified in I/O form do not need a state-space realization for control, 2) a state estimator is not required when all output and exogenous variables are measured, 3) the number of decision variables to be optimized does not scale
linearly with the number of system states but outputs, which could be fewer in number, 4) input delays can easily be incorporated in the model by simply shifting the sequence in $U_k$ backwards in time. 

\noindent Linearizing~\eqref{eq:model} w.r.t.\ a sequence of inputs $\hat{U}$ (that is, $U_k=\hat U+\Delta U$) and outputs $\hat{Y}$ ($Y_k=\hat Y+\Delta Y$) gives \par \vspace{-10pt}
\footnotesize
\begin{multline*} 
	\mathcal{M}(\hat{Y},~\hat{U},~S_k) + \left(\left.\Jac_{Y_k}\mathcal{M}(Y_k,~U_k,~S_k)\right|_{\hat{Y},~\hat{U}}\right) \Delta Y + \left(\left.\Jac_{U_k}\mathcal{M}(Y_k,~U_k,~S_k)\right|_{\hat{Y},~\hat{U}}\right) \Delta U= \bm{0},
\end{multline*}
\normalsize
which is equivalently written as the following affine parameter-varying I/O model, i.e., \par \vspace{-15pt}
\footnotesize
\begin{multline}\label{eq:arxmimovec}
	-A\left(S_k\right)_0 \Delta y_{k} = \sum_{j=1}^{n_{\mathrm{a}}} A\left(S_k\right)_j  \Delta y_{{k-j}} +\sum_{j=1}^{n_{\mathrm{b}}} B\left(S_k\right)_j  \Delta u_{{k-j}} + \mathcal{M}(\hat{Y},~\hat{U},~S_k),
\end{multline}\normalsize
where the Jacobian matrices 
\footnotesize
\begin{align*}
	A\left(S_k\right)_j &= \left.\Jac_{y_{k-j}} \mathcal{M}(Y_k,~U_k,~S_k)\right|_{\hat Y,~\hat U} ~\in \mathbb{R}^{n_y \times n_y},~\forall j \in [0, n_a], \\
	B\left(S_k\right)_j &= \left.\Jac_{u_{k-j}} \mathcal{M}(Y_k,~U_k,~S_k)\right|_{\hat Y,~\hat U} ~\in \mathbb{R}^{n_y \times n_u},~\forall j \in [1, n_b]. 
\end{align*}\normalsize

Note that for the special case of LTI models in ARX form, in~\eqref{eq:arxmimovec} $A_0$ would be an identity matrix whereas $S_k$ would be absent and $\mathcal{M}(\hat{Y},~\hat{U}) = \bm{0}$,
$\hat{Y}= \bm{0}$, $\hat{U}=\bm{0}$.

We consider the following performance index (P) which is commonly employed for reference tracking in MPC: \par \vspace{-14pt}
{\footnotesize
	\begin{multline} \label{eq:cost} 
		\hspace{-10pt} \text{P}_k=\sum\limits_{j=1}^{N_{\mathrm{p}}}\frac{1}{2}\Vert W_{y_{k+j}}(y_{k+j}-\bar{y}_{k+j})\Vert_2^2  + \sum\limits_{j=0}^{N_{\mathrm{u}}-2} \frac{1}{2}\Vert W_{u_{k+j}} (u_{k+j}-\bar{u}_{k+j})\Vert_2^2 \\+\frac{1}{2}(N_{\mathrm{p}}-N_{\mathrm{u}}+1)\cdot \Vert W_{u_{k+j}}(u_{k+N_{\mathrm{u}}-1}-\bar{u}_{k+N_{\mathrm{u}}-1})\Vert_2^2,
\end{multline}}\normalsize where $N_{\mathrm{p}}$ and $N_{\mathrm{u}}$ denote the prediction and control horizon respectively. Matrices $W_{y_{(\cdot)}} \in \mathbb{R}^{n_y\times n_y}$, $W_{u_{(\cdot)}} \in \mathbb{R}^{n_u\times n_u}$ denote tuning weights, 
vectors $\bar{y}$, $\bar{u}$ denote output and input references, respectively.

The methods described later in this paper can straightforwardly be extended to any performance index which is a sum of squares of linear or differentiable nonlinear functions.

The MPC optimization problem is formulated based on the cost function~\eqref{eq:cost} subject to constraints on the vector
$w_k=(u_k,\ldots,u_{k+N_{\mathrm{u}}-1},y_{k+1},\ldots,y_{k+N_{\mathrm{p}}})$ of input and output variables. In this paper we will consider equality constraints that arise from the prediction model~\eqref{eq:model}, and restrict inequality constraints to only simple bounds on 
input and output variables. General (\textit{soft}) inequality constraints~\eqref{eq:genineq} can nevertheless be included as equalities by introducing non-negative slack variables $\nu \in \mathbb{R}^{n_{\mathrm{i}}}$ such that
\begin{align}\label{eq:genineq}
	g(w_k) &\leq \bm{0} \quad \text{becomes},\\
	g(w_k) + \nu_k &= \bm{0} \quad \text{and}\notag \quad
	\nu_k \geq 0,
\end{align}
where $z_k=(w_k,\nu_k)$ and $g:\mathbb{R}^{(n_z-n_{\mathrm{i}})}\to \mathbb{R}^{n_{\mathrm{i}}}$ is assumed to be differentiable, while $n_z$ and $n_{\mathrm{i}}$ denote the number of decision variables and general inequality constraints respectively. In summary, the MPC optimization problem to be solved at each step $k$ is 
\begin{subequations}
	\label{eq:nlp}
	\begin{align}
		\min_{z_k} & \frac{1}{2}\Vert W_k(z_k-\bar{z}_k) \Vert_2^2\label{eq:nlpcost}\\
		\text{s.t. } & h_k(z_k,~\phi_k) = \bm{0}, \label{eq:eqineq}\\
		&p_k \leq z_k \leq q_k, \label{eq:box}
	\end{align}
\end{subequations}
where 
$p_k, q_k$ are vectors defining bounds on the input and output variables, and possible non-negativity constraint on slack variables. Some components of $z_k$ may be unbounded, in that case those bounds are passed to the solver we propose later as the largest negative or positive floating-point number in the computing platform, so that we can assume $p_k, q_k \in \mathbb{R}^{n_z}$. Vector $\phi_k=(u_{k-n_{\mathrm{b}}+1},\ldots,u_{k-1},y_{k},\ldots,y_{k-n_{\mathrm{a}}+1})$ denotes the initial condition whereas $\bar{z}$ contains references on the decision variables. The block-sparse matrix $W_k$ is constructed from the tuning weights ($W_{u_{(\cdot)}}, W_{y_{(\cdot)}}$) defined in~\eqref{eq:cost}. 

\section{Eliminating equality constraints}\label{sec:elimeq}
Handling equality constraints via penalty functions, or an augmented Lagrangian method, has proven to be effective for efficiently solving constrained optimization problems~\cite{Nocedal, BertsekasMM},\cite[Chapter 22]{LawsonHanson}. This section shows how similar methods can be applied to efficiently solve MPC problems of the form~\eqref{eq:nlp}. In order to employ fast solution methods, the general constrained problem~\eqref{eq:nlp} can be simplified as a box-constrained nonlinear least-squares (NLLS-box) problem by
using a quadratic penalty function and consequently eliminating the equality constraints~\eqref{eq:eqineq}  such that~\eqref{eq:nlp} becomes
\begin{align}\label{eq:nllsbox}
	\min_{p_k\leq z_k \leq q_k} & \frac{1}{2}\left\Vert \begin{array}{c} \frac{1}{\sqrt{\rho}} W_k(z_k-\bar{z}_k)\\ h_k(z_k,~\phi_k) \end{array}\right \Vert_2^2 \equiv\min_{p_k\leq z_k \leq q_k} \frac{1}{2}\Vert r_k(z_k)\Vert_2^2,
\end{align}
where the penalty parameter $\rho$ is a positive scalar and $r:\mathbb{R}^{n_z}\to \mathbb{R}^{n_r}$ denotes the vector of residuals. We propose the reformulation~\eqref{eq:nllsbox} of problem~\eqref{eq:nlp} for the following reasons:
\begin{enumerate}
	\item Penalizing the violation of equality constraints makes problem~\eqref{eq:nllsbox} always feasible;
	\item No additional slack variables are needed for softening output constraints, which would result in inequalities of general type instead of box constraints;
	\item While solving~\eqref{eq:nllsbox}, since we do not include additional slack variables to soften constraints, the function~$h_k$
	does not need to be analytic beyond bounds, which is discussed in further detail in Section~\ref{sec:solvers} (cf.\ Remark~\ref{rem:primfeas});
	\item No dual variables need to be optimized to handle equality constraints;
	\item Problem~\eqref{eq:nllsbox} is simpler to solve as compared to~\eqref{eq:nlp}, for instance, when using SQP algorithms (cf.\ Section~\ref{sec:solvers}), initializing a feasible guess is straightforward, the subproblems are feasible even with inconsistent linearizations of $h_k$, and convergence impeding phenomena such as the Maratos effect~\cite{Nocedal} are implicitly avoided.
\end{enumerate}
Relaxing the equality constraints as above also has an engineering justification~\cite{bvlsmpc}: As the prediction model~\eqref{eq:model} is only an approximation of the true system dynamics, (opportunistic) violations of the dynamic model equations will only affect the quality of predictions, depending on the magnitude of the violation.  Instead of using the iterative quadratic penalty method (QPM)~\cite[Framework 17.1]{Nocedal} with increasing values of $\rho$ in each iteration, we propose to use a single iteration with a large value of $\rho$ for solving~\eqref{eq:nlp}, owing to the fact that a good initial guess is often available in MPC. It has been proven in~\cite[Theorem 1]{bvlsmpc} that for a quadratic cost~\eqref{eq:nlpcost} subject to only consistent linear equality constraints, a single QPM iteration with sufficiently large penalty $\rho$ may result in negligible solution inaccuracy.
This has been clearly demonstrated by numerical examples in~\cite{bvlsmpc, nmpcbvls} for the general case. A practical upper bound on $\rho$ depends on the computing precision and numerical robustness of the optimization solver such that the Jacobian of the vector of residuals in~\eqref{eq:nllsbox} is numerically full-rank. 
The parameter $\rho$ is tuned (cf.\ \cite[Section 3.5.3]{thesisNS}) based on the fact that a higher value results in a lower solution inaccuracy at the cost of problem scaling which may affect the convergence rate of the adopted solution methods. A theoretical lower bound on $\rho$ exists and has been derived in~\cite{bvlsmpc} for the case of LTI systems based on closed-loop stability conditions. The extension of such a result to the general case is not immediate and thereby poses a risk given that the bound is not deterministic. However, in practice, based on the arguments and details in the references mentioned in this section, we expect a sufficiently low value of the equality constraint violation. 

An alternative approach to solve the optimization problem~\eqref{eq:nlp} without the equality constraints~\eqref{eq:eqineq} is the  bound-constrained Lagrangian method (BLM)~\cite[Algorithm 17.4]{Nocedal}, which can efficiently be solved by iteratively using the nonlinear gradient projection algorithm~\cite{Nocedal}. At each iteration $(i)$ of the BLM, one solves \par \vspace{-10pt}
\footnotesize
\begin{align}\label{eq:ALM}
	z_k^{(i+1)} =\arg \min\limits_{p_k\leq z_k \leq q_k}  \frac{1}{2}\left\Vert \begin{array}{c}  W_k(z_k-\bar{z}_k)\\ \sqrt{\rho^{(i)}} h_k(z_k,~\phi_k) \end{array}\right \Vert_2^2 + {\Lambda_k^\top}^{(i)} h_k(z_k,\phi_k)
\end{align}\normalsize
where $\Lambda$ denotes the vector of Lagrange multipliers corresponding to the equality constraints, and updates the estimates $\Lambda^{(i)}$ and $\rho^{(i)}$, until convergence (cf.\ \cite[Algorithm 17.4]{Nocedal}). 
\begin{proposition}\label{pro:alm_nllsbox}
	The optimization problem~\eqref{eq:ALM} is equivalent to the NLLS-box problem
	\begin{align}\label{eq:ALMbvnlls}
		z_k^{(i+1)} = \argmin_{p_k\leq z_k \leq q_k}  \frac{1}{2}\left\Vert \begin{array}{c}  \frac{1}{\sqrt{\rho^{(i)}}}W_k(z_k-\bar{z}_k)\\  h_k(z_k,~\phi_k) + \frac{\Lambda^{(i)}_k}{\rho^{(i)}} \end{array}\right \Vert_2^2
	\end{align}
\end{proposition}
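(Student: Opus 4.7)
The plan is to show that the two objective functions differ only by a constant (independent of $z_k$) and a positive multiplicative factor, from which the equivalence of the minimizers over the same feasible box follows immediately.

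First I would expand the squared norm in~\eqref{eq:ALMbvnlls}. Writing it as the sum of two squared norms, the first term gives $\tfrac{1}{2\rho^{(i)}}\|W_k(z_k-\bar z_k)\|_2^2$ and the second expands as
\[
\tfrac{1}{2}\|h_k(z_k,\phi_k)\|_2^2 + \tfrac{1}{\rho^{(i)}}{\Lambda^{(i)}_k}^\top h_k(z_k,\phi_k) + \tfrac{1}{2(\rho^{(i)})^2}\|\Lambda^{(i)}_k\|_2^2,
\]
using that $\|a+b\|_2^2 = \|a\|_2^2 + 2a^\top b + \|b\|_2^2$.

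Next, since $\rho^{(i)}>0$, multiplying the whole objective by $\rho^{(i)}$ does not alter the argmin. After multiplication, the objective becomes
\[
\tfrac{1}{2}\|W_k(z_k-\bar z_k)\|_2^2 + \tfrac{\rho^{(i)}}{2}\|h_k(z_k,\phi_k)\|_2^2 + {\Lambda^{(i)}_k}^\top h_k(z_k,\phi_k) + \tfrac{\|\Lambda^{(i)}_k\|_2^2}{2\rho^{(i)}}.
\]
The first three terms coincide exactly with the objective of~\eqref{eq:ALM} once the stacked norm there is expanded (the $\sqrt{\rho^{(i)}}$ scales into $\rho^{(i)}$ inside the squared norm), while the last term is a constant with respect to $z_k$.

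Finally, since both problems are minimized over the same box $p_k\leq z_k\leq q_k$ and differ only by a strictly positive scaling and an additive constant, their argmins coincide, establishing the claim. I do not anticipate a real obstacle here; the only care needed is to keep track of the factor $\rho^{(i)}$ when pulling $\sqrt{\rho^{(i)}}$ in and out of the norms, and to note that positivity of $\rho^{(i)}$ (which is part of the BLM update rule) is what justifies the rescaling step.
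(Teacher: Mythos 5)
Your proposal is correct and is essentially the paper's own argument run in reverse: the paper completes the square on the $h_k$ terms (adding the constant $\Vert\Lambda_k^{(i)}\Vert_2^2/(2\rho^{(i)})$) and then rescales by $1/\rho^{(i)}$, whereas you expand the squared norm in~\eqref{eq:ALMbvnlls} and rescale by $\rho^{(i)}$; the two computations are identical up to direction. No gap.
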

\begin{proof} We have that problem 
	\begin{align*}\argmin_{p_k\leq z_k \leq q_k}  \frac{1}{2}\left\Vert \begin{array}{c}  W_k(z_k-\bar{z}_k) \\ \sqrt{\rho^{(i)}} h_k(z_k,~\phi_k) \end{array}\right \Vert_2^2&\\ + {\Lambda_k^\top}^{(i)} h_k&(z_k,\phi_k)\end{align*} 

	\begin{align*}\text{and~}\argmin_{p_k\leq z_k \leq q_k}  \frac{1}{2}\left\Vert W_k(z_k-\bar{z}_k) \right\Vert_2^2 + \mathcal{H}(z_k), \end{align*} where $\mathcal{H}(z_k)$ 
	\begin{align*} & = \frac{\rho^{(i)}}{2} \left\Vert h_k(z_k,~\phi_k) \right \Vert_2^2 + {\Lambda_k^\top}^{(i)} h_k(z_k,\phi_k) + \frac{\left\Vert\Lambda_k^{(i)}\right\Vert_2^2}{2\rho^{(i)}}\\ &= \frac{\rho^{(i)}}{2}\left(\left\Vert h_k(z_k,~\phi_k) \right \Vert_2^2 + \frac{2{\Lambda_k^\top}^{(i)} h_k(z_k,\phi_k)}{\rho^{(i)}} + \left\Vert\frac{\Lambda_k^{(i)}}{\rho^{(i)}}\right\Vert_2^2\right)\\ &=  \frac{\rho^{(i)}}{2} \left(h_k(z_k,~\phi_k) + \frac{\Lambda^{(i)}_k}{\rho^{(i)}}\right)^{\top}\left(h_k(z_k,~\phi_k) + \frac{\Lambda^{(i)}_k}{\rho^{(i)}}\right)\\ &= \frac{\rho^{(i)}}{2}\left\Vert h_k(z_k,~\phi_k) + \frac{\Lambda^{(i)}_k}{\rho^{(i)}}\right\Vert_2^2, \text{~are equivalent.}
	\end{align*} 
	Scaling by the constant $1/\rho^{(i)}$ yields the result.
\end{proof}	
\begin{remark}
	Proposition~\ref{pro:alm_nllsbox} holds for any sum-of-squares cost function with~\eqref{eq:nlpcost} as the special case, for instance $\Vert \mathcal{S}(z_k)\Vert_2^2$, where $\mathcal{S}$ is any vector-valued function. 
\end{remark}
Proposition~\ref{pro:alm_nllsbox} shows that we can employ the same NLLS-box solvers to solve~\eqref{eq:ALM}, which may be more efficient and numerically robust~(cf.\ Section~\ref{sec:solvers}) as compared to the use of other NLP solvers. When using BLM, sequences of $z_k^{(i)}$ and $\Lambda_k^{(i)}$ respectively converge to their optimal values $z_k^\star$ and $\Lambda^{\star}_k$ whereas $h_k(z_k^{\star},\phi_k) \approx \bm{0}$, numerically. Then via Proposition~\ref{pro:alm_nllsbox}, we note that for a fixed value of $\rho \gg \left\Vert\Lambda_k^{\star}\right\Vert_\infty$ in the equality-constrained case, we obtain\\ $h_k\left(z_k^{(i+1)},\phi_k\right)~\approx~\Lambda^{(i+1)}_k/\rho~\approx~\bm{0}$~\cite[Chapter 17]{Nocedal}, which is simply the solution obtained using a single iteration of QPM for the same $\rho$ and is consistent with the special case described by~\cite[Theorem 1]{bvlsmpc}. 

Although with BLM it is possible to initialize $\rho$ to aribtrarily low values and solve numerically easier problems, which is its main advantage over QPM, the final value of $\rho$ is not guaranteed to remain low. A main disadvantage of BLM over QPM is that it needs problem~\eqref{eq:nlp} to be feasible, otherwise the problem must be formulated with \textit{soft constraints} on output variables~\cite{Kerrigansoft}, which typically results in the use of penalty functions with large values of the penalty parameter and non-box inequality constraints, making the problems relatively more difficult to solve. Moreover, even if the feasibility of~\eqref{eq:nlp} is given, it may take significantly longer to solve multiple instances of~\eqref{eq:ALM} as compared to a single iteration of QPM with a large penalty, which is more suitable for MPC problems where slight suboptimality may be preferable to a longer computation time. However, in the presence of \textit{hard} general (nonlinear) inequality constraints where QPM might not be applicable, using BLM for feasible problems with the proposed solver and sparsity exploiting methods described in the following sections may be an efficient alternative. BLM is not discussed further as the scope of this paper is limited to MPC problems with box constraints on decision variables.

\section{Optimization algorithm}\label{sec:solvers}
\subsection{Bounded-variable nonlinear least squares}\label{sec:bvnlls}
In order to efficiently solve the MPC problem~\eqref{eq:nllsbox}, it is desirable to have a solution method that benefits from warm-starting information, is robust to problem scaling, and exploits the structure of the problem.  The bounded-variable nonlinear least-squares (BVNLLS) method we propose in Algorithm~\ref{alg:bvnlls} addresses such features.
It can be seen as either an \textit{ad~hoc} primal-feasible line-search SQP algorithm~\cite{Nocedal} or an extension of the Gauss-Newton method~\cite[Section~9.2]{Bjorck} to handle box-constraints.  The Gauss-Newton approximation of the Hessian is effective for nonlinear least-squares cost functions and it only needs first-order information of the residual. Although the global convergence property of Algorithm~\ref{alg:bvnlls} follows that of line-search methods for problems with simple bounds~\cite{BertsekasMM}, we provide below an alternative proof specific to BVNLLS for an insightful overview which also justifies the backtracking rule (Steps~\ref{step:linesearch1}-\ref{step:linesearch2} of Algorithm~\ref{alg:bvnlls}), that is analogous to the \textit{Armijo condition}~\cite{Nocedal} for the choice of the step-size $\alpha$. 
\begin{algorithm}[t!]
	\caption{Bounded-Variable Nonlinear Least Squares (BVNLLS) solver}
	\begin{algorithmic}[1]
		\INPUT Bounds $p,q\in\mathbb{R}^{n_z}$, feasible initial guess $z$, $b=r(z)$, optimality tolerance $\gamma\geq0$, $c\in(0,0.5)$, $\tau~\in~(0,1)$.
		\algrule
		\STATE $J \gets \Jac_zr$ (Linearization);\vspace{1pt}\label{step:start}
		\STATE $\mathcal{L} \gets \{j|z(j)\leq p(j) \}$; $\mathcal{U} \gets \{j|z(j) \geq q(j) \}$;\vspace{2.5pt}
		\STATE $d \gets J^{\top}b$ (Compute gradient of the cost function);\STATEx $\lambda_p(j)\gets d(j), \forall j\in\mathcal{L}$; $\lambda_q(j)\gets-d(j),\forall j \in \mathcal{U}$;\label{step:lambda}\vspace{1pt}
		\STATE \textbf{if} $\lambda_p(j)\geq -\gamma, \forall j\in \mathcal{L}$ \textbf{and}  $\lambda_q(j)\geq -\gamma, \forall j\in \mathcal{U}$ \textbf{and} $|d(j)| \leq\gamma, \forall j\notin \mathcal{L}\cup\mathcal{U}$ \textbf{then go to} Step~\ref{step:end} (Stop if converged to a first-order optimal point);\label{step:termination}\vspace{1pt}
		\STATE $\Delta z \gets \argmin_{p-z \leq\Delta \hat z\leq q-z} \Vert J\Delta \hat z + b \Vert_2^2$ (search direction);\label{step:bvls} \vspace{2pt}
		\STATE $\alpha=1$; $\theta \gets c\alpha d^\top\Delta z$; $\psi \gets b^{\top}b$; $b\gets r(z+\Delta z)$; $\Phi~\gets~b^{\top}b$;\vspace{1pt}
		\label{step:linesearch1}
		\WHILE{$\Phi>\psi+\theta$} (Backtracking line search)\\
		$\alpha \gets \tau \alpha$; $\theta \gets \alpha \theta$;\\
		$b \gets r(z+\alpha\Delta z)$; $\Phi \gets b^{\top}b$;
		\ENDWHILE \label{step:linesearch2}
		\STATE $z \gets z+\alpha\Delta z$; \textbf{go to} Step~\ref{step:start} (Update the iterate);\label{step:update}
		\STATE $z^{\star}\gets z$; $\lambda_p(j)\gets 0, \forall j\notin \mathcal{L}$; $\lambda_q(j) \gets 0, \forall j\notin \mathcal{U}$; \label{step:end}				
		\STATE \textbf{end.}
		\algrule
		\OUTPUT Local minimum $z^{\star}$ of~\eqref{eq:nlp}, objective function value $\Phi$ at $z^{\star}$, and Lagrange multiplier vectors $\lambda_p$ and $\lambda_q$  corresponding to lower and upper bounds, respectively.
	\end{algorithmic}
	\label{alg:bvnlls}
\end{algorithm}

\subsection{Global convergence}
At the $i{\text{th}}$ iteration of Algorithm~\ref{alg:bvnlls}, the search direction $\Delta z^{(i)}$ is computed at Step~\ref{step:bvls} as 
\begin{equation}\label{eq:bvls}
	\Delta z^{(i)} = \argmin_{\bar p \leq\Delta \hat z\leq \bar q} \Vert J \Delta \hat z + b \Vert_2^2,
\end{equation}
where the Jacobian matrix $J =\Jac_z r\left(z^{(i-1)}\right)$ is full rank, $b = r\left(z^{(i-1)}\right)$, $\bar p = p - z^{(i-1)}$ and $\bar q = q - z^{(i-1)}$, and $p,q$ are the bounds on $z$. 
\begin{lemma}[Primal feasibility]\label{lem:primfeas}
	Consider that $z^{(i)} = z^{(i-1)}+\alpha \Delta z^{(i)}$ as in Step~\ref{step:update} at the $i{\text{th}}$ iteration of Algorithm~\ref{alg:bvnlls} with any $\alpha \in (0,~1]$. If $p\leq z^{(0)} \leq q$ and $\bar{p} \leq \Delta z^{(i)} \leq \bar{q}$, then $p \leq z^{(i)} \leq q$ at all iterations $i$.
\end{lemma}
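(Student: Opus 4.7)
The statement is essentially the observation that box-feasibility is preserved under any convex combination with the endpoint of a feasible direction, so I would set it up as a simple induction on the iteration index $i$.

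The plan is as follows. I take the induction hypothesis $p \leq z^{(i-1)} \leq q$ (the base case $i=0$ is given in the statement). The goal reduces to showing that $\bar p \leq \alpha \Delta z^{(i)} \leq \bar q$ componentwise, since adding $z^{(i-1)}$ throughout then yields $p \leq z^{(i)} \leq q$ by the definitions $\bar p = p - z^{(i-1)}$ and $\bar q = q - z^{(i-1)}$.

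To get the scaled inclusion, I would first observe two sign facts that come for free. From the induction hypothesis, $\bar p \leq \bm 0 \leq \bar q$ componentwise. Combined with the hypothesis $\bar p \leq \Delta z^{(i)} \leq \bar q$, this lets me split componentwise on the sign of $\Delta z^{(i)}(j)$: if $\Delta z^{(i)}(j)\geq 0$ then $0 \leq \alpha \Delta z^{(i)}(j) \leq \Delta z^{(i)}(j) \leq \bar q(j)$ and $\alpha\Delta z^{(i)}(j) \geq 0 \geq \bar p(j)$; if $\Delta z^{(i)}(j) < 0$ then $\bar p(j) \leq \Delta z^{(i)}(j) \leq \alpha \Delta z^{(i)}(j) \leq 0 \leq \bar q(j)$. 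Either way $\bar p(j) \leq \alpha \Delta z^{(i)}(j) \leq \bar q(j)$, which is exactly what is needed. The use of $\alpha \in (0,1]$ enters precisely in the inequalities $\alpha t \leq t$ for $t\geq 0$ and $\alpha t \geq t$ for $t\leq 0$.

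There is no genuine obstacle here; the only thing to be slightly careful about is to work componentwise rather than at the vector level, because the search direction $\Delta z^{(i)}$ can mix positive and negative entries and the bound vectors $\bar p,\bar q$ likewise have mixed signs. Once the componentwise argument is in place, the induction closes immediately and the lemma is proved for all $i$.
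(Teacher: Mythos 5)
Your proof is correct and follows essentially the same route as the paper: induction on $i$, using the feasibility $\bar p \leq \Delta z^{(i)} \leq \bar q$ of the search direction together with $\alpha\in(0,1]$ and the inductive hypothesis $p\leq z^{(i-1)}\leq q$. The only cosmetic difference is that the paper argues at the vector level via the convex combination $\alpha p + (1-\alpha)z^{(i-1)} \leq z^{(i)} \leq \alpha q + (1-\alpha)z^{(i-1)}$, whereas you do an equivalent componentwise sign split on $\Delta z^{(i)}(j)$.
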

\begin{proof}
	We prove the lemma by induction. The lemma clearly holds for $i=0$, as
	by assumption the initial guess $z^0$ is feasible, $p\leq z^{(0)} \leq q$. 
	Consider the $i{\text{th}}$ iteration of Algorithm~\ref{alg:bvnlls}.
	From Step~\ref{step:bvls} we have that $p-z^{(i-1)}\leq \Delta z^{(i)}\leq q-z^{(i-1)}$, which multiplied by $\alpha$, $
	\alpha>0$, gives
	\begin{equation}
		\alpha p - \alpha z^{(i-1)} \leq \alpha \Delta z^{(i)} \leq \alpha q - \alpha z^{(i-1)}. 
		\label{eq:barpz1q}
	\end{equation}
	By adding $z^{(i-1)}$ to each side of the inequalities in~\eqref{eq:barpz1q} 
	we get
	\begin{align}\label{eq:pz1qalpha}
		\alpha p + (1 - \alpha) z^{(i-1)} \leq z^{(i)} \leq \alpha q + (1 - \alpha) z^{(i-1)}.
	\end{align}
	By induction, let us assume that $p \leq z^{(i-1)} \leq q$.
	Since $\alpha\leq 1$, we get the further inequalities
	\begin{alignat*}
		\alpha p + (1 - \alpha) p &\leq z^{(i)} &&\leq \alpha q + (1 - \alpha) q
	\end{alignat*}
	or $p\leq z^{(i)}\leq q$.
\end{proof}	
\begin{lemma}\label{lem:dirder}
	The search direction $\Delta z^{(i)}$ given by~\eqref{eq:bvls} is a descent direction for the cost function $f(z) = \frac{1}{2}\Vert r(z)\Vert_2^2$ in~\eqref{eq:nllsbox}.
\end{lemma}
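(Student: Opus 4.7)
The plan is to use the fact that $\Delta z^{(i)} = 0$ is always a feasible point for the subproblem~\eqref{eq:bvls}, because the current iterate $z^{(i-1)}$ lies in the box $[p,q]$ by Lemma~\ref{lem:primfeas}, so $\bar p \leq 0 \leq \bar q$. Optimality of $\Delta z^{(i)}$ then forces the value of the subproblem objective to be no larger than the value at $\Delta \hat z = 0$, which is exactly $\|b\|_2^2 = 2f(z^{(i-1)})$.

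First I would compute the gradient of the cost. Since $f(z) = \tfrac{1}{2} r(z)^\top r(z)$, the chain rule gives $\nabla_z f(z^{(i-1)}) = J^\top b = d$, so the directional derivative along $\Delta z^{(i)}$ is $d^\top \Delta z^{(i)} = b^\top J \Delta z^{(i)}$. Next I would expand the squared norm in~\eqref{eq:bvls}:
\begin{equation*}
\|J\Delta z^{(i)} + b\|_2^2 = \|J\Delta z^{(i)}\|_2^2 + 2\, b^\top J \Delta z^{(i)} + \|b\|_2^2.
\end{equation*}
Using optimality against the feasible point $\Delta \hat z = 0$, the left-hand side is at most $\|b\|_2^2$, and rearranging yields
\begin{equation*}
b^\top J \Delta z^{(i)} \leq -\tfrac{1}{2}\|J \Delta z^{(i)}\|_2^2 \leq 0,
\end{equation*}
which immediately shows that $\Delta z^{(i)}$ is a descent direction (non-strictly).

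To obtain strict descent whenever the algorithm does not terminate, I would invoke the standing assumption that $J$ is full (column) rank, so that $J\Delta z^{(i)} = 0$ would force $\Delta z^{(i)} = 0$; but $\Delta z^{(i)} = 0$ would make $z^{(i-1)}$ a KKT point of~\eqref{eq:bvls}, and the KKT conditions of the subproblem coincide with the first-order optimality conditions of~\eqref{eq:nllsbox} tested at Step~\ref{step:termination}, so the algorithm would have already exited at Step~\ref{step:termination} with tolerance $\gamma = 0$. Hence whenever Step~\ref{step:bvls} is actually executed and followed by the line search, $J\Delta z^{(i)} \neq 0$ and consequently $d^\top \Delta z^{(i)} < 0$ strictly.

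The only subtle point is the last step, linking $\Delta z^{(i)} = 0$ to the termination check: one must verify that the KKT multipliers of the bound-constrained LS subproblem~\eqref{eq:bvls} at $\Delta \hat z = 0$ are exactly the quantities $\lambda_p, \lambda_q$ formed from $d$ at Step~\ref{step:lambda}, so that the test at Step~\ref{step:termination} correctly detects this case. Once that correspondence is noted, the remainder is the short algebraic argument above.
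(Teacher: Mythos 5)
Your proof is correct, and it reaches the conclusion by a genuinely different and more elementary route than the paper. The paper works through the full KKT system of the subproblem~\eqref{eq:bvls}: it uses the stationarity condition to write the exact identity $b^\top J\Delta z^{(i)} = (\Lambda_{\bar p}-\Lambda_{\bar q})^\top\Delta z^{(i)} - \Delta z^{(i)\top}J^\top J\Delta z^{(i)}$, then invokes complementary slackness together with Lemma~\ref{lem:primfeas} to show $(\Lambda_{\bar p}-\Lambda_{\bar q})^\top\Delta z^{(i)}\leq 0$, and finally uses $J^\top J>0$ and Lemma~\ref{lem:dzne0} to get strict negativity. You instead compare the optimal objective value against the feasible point $\Delta\hat z=\bm{0}$ (feasible precisely because of Lemma~\ref{lem:primfeas}) and expand the square to obtain $b^\top J\Delta z^{(i)}\leq -\tfrac{1}{2}\Vert J\Delta z^{(i)}\Vert_2^2$, closing the strictness gap with the same two ingredients ($J$ full column rank and $\Delta z^{(i)}\neq\bm{0}$, the latter being exactly the content of Lemmas~\ref{lem:zk_1kkt} and~\ref{lem:dzne0}, which you correctly flag as the one correspondence that must be checked against Steps~\ref{step:lambda}--\ref{step:termination}). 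Your argument avoids any manipulation of multipliers and is shorter; what the paper's heavier derivation buys is the exact identity~\eqref{eq:dirderDz}, which is not needed for this lemma but is reused verbatim in the proof of Theorem~\ref{thm:converg} to pass from~\eqref{eq:taylorJ} to~\eqref{eq:taylorJsimplified}, so your inequality-only version would not substitute for it there. Also note that your factor $\tfrac12$ is slightly weaker than the paper's bound $b^\top J\Delta z^{(i)}\leq -\Vert J\Delta z^{(i)}\Vert_2^2$, though this is immaterial for the descent conclusion.
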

\begin{proof}
	If $\mathcal{D}\left(f(z),~\Delta z\right)$ denotes the directional derivative of $f(z)$ in the direction $\Delta z$, then $\Delta z^{(i)}$ is a descent direction if $\mathcal{D}\left(f\left(z^{(i-1)}\right),~\Delta z^{(i)}\right) < 0$. By definition of directional derivative~\cite[Appendix A]{Nocedal},
	\begin{equation}\label{eq:dirder}
		\mathcal{D}\left(f\left(z^{(i-1)}\right),~\Delta z^{(i)}\right) = \nabla_z f\left(z^{(i-1)}\right)^\top \Delta z^{(i)}.
	\end{equation}
	By substituting
	\begin{equation}
		\nabla_z f\left(z^{(i-1)}\right) = \Jac_z r\left(z^{(i-1)}\right)^\top r\left(z^{(i-1)}\right) = J^\top b\label{eq:gradf}
	\end{equation}
	in~\eqref{eq:dirder} we get
	\begin{equation}\label{eq:dirdersimple}
		\mathcal{D}\left(f\left(z^{(i-1)}\right),~\Delta z^{(i)}\right) = b^\top J \Delta z^{(i)}.
	\end{equation}
	As $\Delta z^{(i)}$ solves the convex subproblem~\eqref{eq:bvls}, the following Karush-Kuhn-Tucker (KKT) conditions~\cite{mpcbook} hold:
	\begin{subequations}\label{eq:kkt}
		\begin{flalign}
			J^{\top}\left(J \Delta z^{(i)} + b\right) + \Lambda_{\bar{q}}  - \Lambda_{\bar{p}} &= \bm{0}\label{eq:gradL}\\
			\Delta z^{(i)} &\geq \bar{p}\label{eq:lb}\\
			\Delta z^{(i)} &\leq \bar{q}\label{eq:ub}\\
			\Lambda_{\bar{q}}, \Lambda_{\bar{p}} &\geq \bm{0}\label{eq:lambdageq0}\\
			\Lambda_{\bar{q}}(j)\left(\Delta z^{(i)}(j) - \bar{q}(j)\right) &= 0~\forall j\label{eq:compslack1}\\
			\Lambda_{\bar{p}}(j)\left( \bar{p}(j)-\Delta z^{(i)}(j)\right) &= 0~\forall j,\label{eq:compslack2}
		\end{flalign}
	\end{subequations}
	where $\Lambda_{\bar{q}}$ and $\Lambda_{\bar{p}}$ denote the optimal Lagrange multipliers of subproblem~\eqref{eq:bvls}. From~\eqref{eq:gradL} we have,
	\begin{equation}\label{eq:dirderDz}
		b^\top J \Delta z^{(i)} = \left(\Lambda_{\bar p} - \Lambda_{\bar q}\right)^{\top}\Delta z^{(i)}-\Delta {z^{(i)}}^\top J^\top J \Delta z^{(i)}.
	\end{equation}
	By substituting $\bar p = p - z^{(i-1)}$ and $\bar q = q - z^{(i-1)}$  in the complementarity conditions~\eqref{eq:compslack1}-\eqref{eq:compslack2}, we can write
	\begin{align*}
		&\Lambda_{\bar q}^\top \left(\Delta z^{(i)} - q +z^{(i-1)}\right) + \Lambda_{\bar p}^\top \left(p-z^{(i-1)}-\Delta z^{(i)}\right) = 0, \\
		&\text{i.e., } (\Lambda_{\bar q}-\Lambda_{\bar p} )^{\top}\Delta z^{(i)} =  \Lambda_{\bar{q}}^\top (q-z^{(i-1)}) + \Lambda_{\bar{p}}^\top (z^{(i-1)}-p).
	\end{align*}
	From~\eqref{eq:lb}-\eqref{eq:lambdageq0} we have $\Lambda_{\bar q},\Lambda_{\bar p} \geq\bm{0}$, and by Lemma~\ref{lem:primfeas} $q-z^{(i-1)}\geq \bm{0}$ as well as $z^{(i-1)}-p\geq \bm{0}$, which implies that
	\begin{align}
		(\Lambda_{\bar q}-\Lambda_{\bar p} )^{\top}\Delta z^{(i)} &\geq \bm{0}, \text{i.e.,}\notag\\
		(\Lambda_{\bar p}-\Lambda_{\bar q} )^{\top}\Delta z^{(i)} & \leq \bm{0}\label{eq:nonnegltdz}.
	\end{align}
	Since $J$ is full rank, $J^\top J>0$. Using this fact and Lemma~\ref{lem:dzne0} along with~\eqref{eq:nonnegltdz} in~\eqref{eq:dirderDz} gives
	\begin{align}\label{eq:dirderneg}
		b^\top J \Delta z^{(i)} < 0.
	\end{align}
	Considering~\eqref{eq:dirderneg}~and~\eqref{eq:dirdersimple}, we have that the directional derivative for the considered search direction is negative, which proves the lemma.
\end{proof}
\begin{remark}\label{rem:primfeas} We infer from Lemma~\ref{lem:primfeas} and~\eqref{eq:lb}-\eqref{eq:ub} that BVNLLS is a primal-feasible method, which is an important property when the function $r(z)$ is not analytic beyond bounds~\cite{bellavia}. \end{remark} 

\begin{lemma}\label{lem:zk_1kkt}
	If the solution of~\eqref{eq:bvls} is $\Delta z^{(i)} = \bm{0}$, $z^{(i-1)}$ is a stationary point satisfying the first-order optimality conditions of problem~\eqref{eq:nllsbox}. 
\end{lemma}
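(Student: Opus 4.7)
The plan is to use the KKT conditions~\eqref{eq:kkt} of the convex subproblem~\eqref{eq:bvls}, already cited inside the proof of Lemma~\ref{lem:dirder}, and show that evaluating them at the hypothesized solution $\Delta z^{(i)} = \bm{0}$ produces exactly the first-order KKT conditions of the box-constrained problem~\eqref{eq:nllsbox} at the point $z^{(i-1)}$.

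First I would rewrite~\eqref{eq:gradL} with $\Delta z^{(i)} = \bm{0}$, which collapses to
\begin{equation*}
J^\top b + \Lambda_{\bar q} - \Lambda_{\bar p} = \bm{0}.
\end{equation*}
Recalling from~\eqref{eq:gradf} that $\nabla_z f(z^{(i-1)}) = J^\top b$, this is precisely the stationarity condition of~\eqref{eq:nllsbox} at $z^{(i-1)}$, with $\Lambda_{\bar p}$ and $\Lambda_{\bar q}$ playing the role of the multipliers for the lower and upper bounds on $z$. Nonnegativity of these multipliers is already provided by~\eqref{eq:lambdageq0}.

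Next I would address primal feasibility and complementary slackness. Primal feasibility $p \leq z^{(i-1)} \leq q$ is inherited directly from Lemma~\ref{lem:primfeas}. For complementarity, substituting $\Delta z^{(i)} = \bm{0}$ together with $\bar p = p - z^{(i-1)}$ and $\bar q = q - z^{(i-1)}$ into~\eqref{eq:compslack1}-\eqref{eq:compslack2} yields
\begin{align*}
\Lambda_{\bar q}(j)\bigl(z^{(i-1)}(j) - q(j)\bigr) &= 0, \\
\Lambda_{\bar p}(j)\bigl(p(j) - z^{(i-1)}(j)\bigr) &= 0,
\end{align*}
for every index $j$, which are exactly the complementarity conditions associated with the bounds $p \leq z \leq q$ in~\eqref{eq:nllsbox}. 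Thus every KKT condition of~\eqref{eq:nllsbox} is satisfied at $z^{(i-1)}$ with multipliers $\Lambda_{\bar p}$ and $\Lambda_{\bar q}$, proving that $z^{(i-1)}$ is a first-order stationary point.

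I do not anticipate any real obstacle here: the statement is essentially a bookkeeping consequence of the KKT system already written down, and the only nontrivial ingredient (ensuring $p \leq z^{(i-1)} \leq q$ so that the shifted bounds $\bar p, \bar q$ correctly translate into bounds on $z$) has already been established in Lemma~\ref{lem:primfeas}. The only minor care needed is to make explicit that the multipliers of the subproblem~\eqref{eq:bvls} can be reused verbatim as multipliers for~\eqref{eq:nllsbox}, since the two problems share the same linearized gradient term $J^\top b$ at the current iterate.
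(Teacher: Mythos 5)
Your proposal is correct and follows essentially the same route as the paper: substitute $\Delta z^{(i)} = \bm{0}$ into the KKT system~\eqref{eq:kkt} of the subproblem, identify $J^\top b$ with $\nabla_z f(z^{(i-1)})$ via~\eqref{eq:gradf}, and reuse $\Lambda_{\bar p}, \Lambda_{\bar q}$ as the multipliers for~\eqref{eq:nllsbox}. The only cosmetic difference is that the paper obtains $p \leq z^{(i-1)} \leq q$ directly from~\eqref{eq:lb}--\eqref{eq:ub} evaluated at $\Delta z^{(i)}=\bm{0}$, whereas you invoke Lemma~\ref{lem:primfeas}; both are valid.
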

\begin{proof}
	Given $\Delta z^{(i)} = \bm{0}$, we need to prove that $z^{(i-1)}$ satisfies the following first-order optimality conditions for  problem~\eqref{eq:nllsbox}: 
	\begin{subequations}\label{eq:kktnlp}
		\begin{align}
			\Jac_z r(z)^\top r(z) + \lambda_q -\lambda_p &= \bm{0}\label{eq:LgradNLP}\\
			p\leq z &\leq q \label{eq:primfeas}\\
			\lambda_q, \lambda_p &\geq \bm{0}\label{eq:lambdaNLPgeq0}\\
			\lambda_q(j)(z(j) - q(j)) = \lambda_p(j)( p(j)- z(j)) &= 0,\ \forall j,
		\end{align}
	\end{subequations}
	where the optimal Lagrange multipliers are denoted by $\lambda_p$ and $\lambda_q$ for the lower and upper bounds, respectively. 
	
	By substituting $\Delta z^{(i)} = \bm{0}$ in~\eqref{eq:kkt}, and recalling $\bar{q} = q-z^{(i-1)}$ and $\bar{p}=p-z^{(i-1)}$, we obtain
	\begin{subequations}
		\label{eq:kktdz0}
		\begin{flalign}
			J^{\top}b + \Lambda_{\bar{q}} - \Lambda_{\bar{p}} &= \bm{0},\\
			p \leq z^{(i-1)} &\leq q,\\
			\Lambda_{\bar q}(j)(z^{(i-1)}(j) - q(j)) &= 0~\forall j,\\
			\Lambda_{\bar p}(j)( p(j)- z^{(i-1)}(j)) &= 0~\forall j.
		\end{flalign}
	\end{subequations}
	Clearly, considering~\eqref{eq:lambdageq0} along with the definitions of $J$, $b$, and~\eqref{eq:kktdz0}, we conclude that $z^{(i-1)}$, $\Lambda_{\bar{q}}$ and $\Lambda_{\bar{p}}$ solve the KKT system~\eqref{eq:kktnlp}. 
\end{proof}	
\begin{lemma}\label{lem:dzne0}
	In Algorithm~\ref{alg:bvnlls}, $\Delta z^{(i)} \ne \bm{0}$ at any iteration.

\end{lemma}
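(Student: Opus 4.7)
The plan is to prove this by contradiction, using Lemma~\ref{lem:zk_1kkt} as the key lever. Suppose, toward contradiction, that at some iteration $i$ we actually reach Step~\ref{step:bvls} and obtain $\Delta z^{(i)} = \bm{0}$. By Lemma~\ref{lem:zk_1kkt}, the iterate $z^{(i-1)}$ then satisfies the exact first-order KKT conditions~\eqref{eq:kktnlp} of problem~\eqref{eq:nllsbox}, with associated multipliers $\lambda_p,\lambda_q \geq \bm{0}$. The goal is to show this contradicts the fact that the termination test at Step~\ref{step:termination} failed (otherwise the algorithm would already have jumped to Step~\ref{step:end} and never executed Step~\ref{step:bvls}).

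To derive the contradiction, I would translate the exact KKT conditions into the language of the termination test. Recall $d = J^\top b = \nabla_z f(z^{(i-1)})$, and the algorithm sets $\lambda_p(j) = d(j)$ for $j \in \mathcal{L}$ and $\lambda_q(j) = -d(j)$ for $j \in \mathcal{U}$. From~\eqref{eq:LgradNLP} together with complementary slackness, I split on the activity of the bounds at $z^{(i-1)}$: for $j \in \mathcal{L}$ only the lower-bound multiplier can be nonzero, giving $d(j) = \lambda_p(j) \geq 0$; for $j \in \mathcal{U}$ only the upper multiplier survives, giving $-d(j) = \lambda_q(j) \geq 0$; and for indices $j \notin \mathcal{L}\cup\mathcal{U}$ both multipliers vanish, forcing $d(j) = 0$. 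Since the tolerance $\gamma \geq 0$, each of these strict inequalities comfortably satisfies $\lambda_p(j) \geq -\gamma$, $\lambda_q(j) \geq -\gamma$, and $|d(j)| \leq \gamma$, respectively, meaning the test in Step~\ref{step:termination} \emph{does} trigger — contradicting the assumption that the algorithm proceeded to compute $\Delta z^{(i)}$ at Step~\ref{step:bvls}.

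The one subtlety I would be careful about is a possible circularity worry, since Lemma~\ref{lem:dirder}'s proof already invokes Lemma~\ref{lem:dzne0}; here I only need Lemma~\ref{lem:zk_1kkt} and the KKT system~\eqref{eq:kkt} of the subproblem (which are established independently of the descent lemma), so the argument is self-contained. The main obstacle, such as it is, is purely bookkeeping: aligning the sign conventions between the inner subproblem multipliers $\Lambda_{\bar p},\Lambda_{\bar q}$ appearing in~\eqref{eq:kkt} (which equal the outer $\lambda_p,\lambda_q$ once $\Delta z^{(i)} = \bm{0}$ is substituted and the definitions of $\bar p,\bar q$ are unrolled) and the termination-test quantities in Step~\ref{step:termination}. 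No analytical machinery beyond this matching is needed.
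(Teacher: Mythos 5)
Your proposal is correct and follows essentially the same route as the paper: assume Step~\ref{step:bvls} returns $\Delta z^{(i)}=\bm{0}$, invoke Lemma~\ref{lem:zk_1kkt} to conclude $z^{(i-1)}$ is a first-order stationary point, and derive a contradiction with the fact that the termination test in Step~\ref{step:termination} did not fire. The paper leaves the translation from the KKT conditions~\eqref{eq:kktnlp} to the quantities checked in Step~\ref{step:termination} implicit, whereas you spell it out (and correctly note that no circularity with Lemma~\ref{lem:dirder} arises); this is a welcome elaboration, not a different argument.
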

\begin{proof}
	We prove this lemma by contradiction. Assume that Algorithm~\ref{alg:bvnlls} reaches an iteration $i$ where Step~\ref{step:bvls} is executed and returns $\Delta z^{(i)} = \bm{0}$. This implies that $z^{(i-1)}$ is a stationary point satisfying the first-order optimality conditions of nonlinear problem~\eqref{eq:nllsbox}, as shown in Lemma~\ref{lem:zk_1kkt}. Then, the termination criterion in Step~\ref{step:termination} would end the algorithm without further computations, so that iteration $i$ is never reached, a contradiction. Note that in particular, if the initial guess $z^{(0)}$ is optimal, $\Delta z^{(i)}$ is never computed.
\end{proof}

\begin{theorem}[Global convergence of BVNLLS]\label{thm:converg}
	Consider the optimization problem~\eqref{eq:nllsbox} and define the scalar cost function $f(z) = \frac{1}{2}\Vert r(z) \Vert_2^2$. At each iteration $i$ of Algorithm~\ref{alg:bvnlls}, there exists a scalar $\alpha \in (0,1]$
	such that 
	\begin{equation}
		f\left(z^{(i-1)}+\alpha \Delta z^{(i)}\right)-f\left(z^{(i-1)}\right)< c\alpha \nabla f\left(z^{(i-1)}\right)^\top \Delta z^{(i)} 
		\label{eq:descent}
	\end{equation} 
	with $0<\alpha\leq1$ and $0<c<1$,
	where $z^{(i)}=z^{(i-1)} + \alpha\Delta z^{(i)}$.
\end{theorem}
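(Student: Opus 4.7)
The plan is to follow the classical Armijo existence argument for line-search methods, specialized to the BVNLLS setting established by the preceding lemmas. Define the scalar function $g(\alpha) = f\!\left(z^{(i-1)} + \alpha \Delta z^{(i)}\right)$ on $\alpha \in [0,1]$. Since $r$ is assumed differentiable, so is $f = \tfrac{1}{2}\Vert r\Vert_2^2$, and the chain rule gives
\begin{equation*}
  g'(0) \;=\; \nabla_z f\!\left(z^{(i-1)}\right)^{\!\top}\! \Delta z^{(i)} \;=\; b^{\top} J\, \Delta z^{(i)},
\end{equation*}
where $b$ and $J$ are as defined above~\eqref{eq:bvls}. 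By Lemma~\ref{lem:dirder} together with Lemma~\ref{lem:dzne0}, this directional derivative is strictly negative, so $\Delta z^{(i)}$ is a genuine descent direction.

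Next, I would expand $g$ to first order around $\alpha = 0$: by differentiability,
\begin{equation*}
  g(\alpha) - g(0) \;=\; \alpha\, g'(0) + o(\alpha) \quad \text{as } \alpha \to 0^+.
\end{equation*}
The Armijo inequality~\eqref{eq:descent} is equivalent to
$\alpha g'(0) + o(\alpha) < c\alpha g'(0)$, i.e.\
\begin{equation*}
  \frac{o(\alpha)}{\alpha} \;<\; (c-1)\, g'(0).
\end{equation*}
Since $c \in (0,0.5) \subset (0,1)$ we have $c - 1 < 0$, and since $g'(0) < 0$ the right-hand side is a \emph{strictly positive} constant, independent of $\alpha$. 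The left-hand side tends to $0$ as $\alpha \to 0^+$, so the inequality is satisfied for all sufficiently small $\alpha > 0$. In particular, there exists some $\alpha^\star \in (0,1]$ for which~\eqref{eq:descent} holds, which is precisely the claim of the theorem.

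I would then remark that this existence result also validates the backtracking loop in Steps~\ref{step:linesearch1}--\ref{step:linesearch2} of Algorithm~\ref{alg:bvnlls}: starting from $\alpha = 1$ and repeatedly multiplying by $\tau \in (0,1)$ produces a sequence of trial step-sizes whose values pass arbitrarily close to $0$, so the loop necessarily terminates at some $\alpha$ satisfying the Armijo test. Combining this with Lemma~\ref{lem:primfeas} (which keeps every $z^{(i)}$ feasible for any $\alpha \in (0,1]$) and Lemma~\ref{lem:zk_1kkt} (which guarantees that the only way $\Delta z^{(i)} = \bm{0}$ can arise is at a first-order KKT point, in which case the termination test in Step~\ref{step:termination} would already have triggered), we get that the algorithm makes strict cost descent at every non-terminal iterate.

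The only subtle point I anticipate is keeping the distinction between the per-iteration existence claim stated in the theorem and a global convergence statement about the full sequence $\{z^{(i)}\}$; the theorem as stated asks only for existence of a single Armijo-satisfying $\alpha$, and the argument above delivers exactly that. The rest — i.e., convergence of the iterates to a stationary point under standard boundedness/Lipschitz assumptions on $\nabla f$ — would follow from the usual Zoutendijk/Armijo-backtracking framework invoked via the cited references~\cite{Nocedal,BertsekasMM}, but is not part of what must be proved here.
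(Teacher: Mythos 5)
Your proof is correct, but it takes a genuinely different and more elementary route than the paper's. You run the textbook Armijo existence argument: a first-order expansion $g(\alpha)-g(0)=\alpha g'(0)+o(\alpha)$ plus the strict negativity of $g'(0)=b^\top J\Delta z^{(i)}$ (Lemmas~\ref{lem:dirder} and~\ref{lem:dzne0}) immediately forces~\eqref{eq:descent} for all sufficiently small $\alpha>0$, since $(c-1)g'(0)>0$ is a fixed positive constant while $o(\alpha)/\alpha\to 0$. The paper instead carries out a second-order Taylor expansion, splits the Hessian into the Gauss--Newton form $H+J^\top J$ as in~\eqref{eq:hessf}, and substitutes the subproblem KKT identity~\eqref{eq:dirderDz} to arrive at the explicit decomposition~\eqref{eq:taylorJlambda}, isolating the dominant negative term $-\frac{\alpha(2-2c-\alpha)}{2}\Delta z^{(i)\top}J^\top J\Delta z^{(i)}$ and the nonpositive multiplier term $\alpha(1-c)(\Lambda_{\bar p}-\Lambda_{\bar q})^\top\Delta z^{(i)}$ against the curvature remainder $\bar M$. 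What your approach buys is brevity and weaker smoothness requirements (you only need $r$ differentiable, consistent with the paper's standing assumption, whereas the paper's expansion implicitly needs second derivatives of $r$ and a third-order error term); what the paper's approach buys is the quantitative structure motivating the parameter choices --- in particular the factor $2-2c-\alpha$ explains why $c$ is kept below $0.5$ so that full steps $\alpha=1$ pass the test when the non-Gauss-Newton curvature $H$ is small, and why backtracking is unnecessary in the linear case. Your closing observations on termination of the backtracking loop and on the distinction between per-iteration descent and convergence of the full sequence are accurate and do not affect the validity of the argument.
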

\begin{proof}

	Consider the Taylor series expansion of $f\left(z^{(i)}\right)$
	\begin{multline}
		f\left(z^{(i-1)}+\alpha \Delta z^{(i)}\right) = f\left(z^{(i-1)}\right) + \alpha\nabla_z f\left(z^{(i-1)}\right)^{\top}\Delta z^{(i)} \\+ \frac{\alpha^2}{2} \Delta {z^{(i)}}^{\top}\nabla_z^2 f\left(z^{(i-1)}\right) \Delta z^{(i)} + \mathcal{E}(\Vert \alpha\Delta z^{(i)}\Vert^3),\label{eq:taylor}
	\end{multline}
	where the term $\mathcal{E}{\Vert(\cdot)\Vert^3}$ represents the third order error. Also,
	\begin{multline}
		\nabla^2_z f\left(z^{(i-1)}\right) = \sum_{j=1}^{n_r}r_j\left(z^{(i-1)}\right)\nabla_z^2 r_j\left(z^{(i-1)}\right) \\+ \Jac_z r\left(z^{(i-1)}\right)^\top \Jac_z r\left(z^{(i-1)}\right) = H + J^\top J, \label{eq:hessf}
	\end{multline}
	where $r_j$ denotes the $j{\text{th}}$ element of the residual vector. 
	By substituting the relations~\eqref{eq:gradf} and~\eqref{eq:hessf} in~\eqref{eq:taylor} we get
	\begin{multline}
		f\left(z^{(i-1)}+\alpha \Delta z^{(i)}\right) - f\left(z^{(i-1)}\right) = \alpha b^\top J \Delta z^{(i)} \\+ \frac{\alpha^2}{2} \Delta {z^{(i)}}^{\top} \left(H+ J^\top J\right) \Delta z^{(i)} + \mathcal{E}\left(\left\Vert \alpha\Delta z^{(i)}\right\Vert^3\right)\label{eq:taylorJ}.
	\end{multline}
	Using~\eqref{eq:dirderDz}, Equation~\eqref{eq:taylorJ} can be simplified as
	\begin{multline}\label{eq:taylorJsimplified}
		f\left(z^{(i-1)}+\alpha \Delta z^{(i)}\right) - f\left(z^{(i-1)}\right)=\\-\frac{\alpha(2-\alpha)}{2}\Delta {z^{(i)}}^\top J^\top J \Delta z^{(i)} + \alpha(\Lambda_{\bar{p}} - \Lambda_{\bar{q}})^\top \Delta z^{(i)}\\ + \frac{\alpha^2}{2}\Delta {z^{(i)}}^{\top} H  \Delta z^{(i)} +\mathcal{E}\left(\left\Vert \alpha\Delta z^{(i)}\right\Vert^3\right).
	\end{multline}
	Referring~\eqref{eq:gradf} and~\eqref{eq:dirderDz}, on subtracting $c\alpha \nabla f(z^{(i-1)})^{\top}\Delta z$ from both sides of~\eqref{eq:taylorJsimplified} we get
	\begin{multline}\label{eq:taylorJlambda}
		f\left(z^{(i-1)}+\alpha \Delta z^{(i)}\right)-f\left(z^{(i-1)}\right) - c\alpha \nabla f\left(z^{(i-1)}\right)^\top \Delta z^{(i)}  \\=-\frac{\alpha(2-2c-\alpha)}{2}\Delta {z^{(i)}}^\top J^\top J \Delta z^{(i)} \\+ \alpha(1-c)(\Lambda_{\bar{p}} - \Lambda_{\bar{q}})^\top \Delta z^{(i)}\\ + \frac{\alpha^2}{2}\Delta {z^{(i)}}^{\top} H  \Delta z^{(i)} +\mathcal{E}\left(\left\Vert \alpha\Delta z^{(i)}\right\Vert^3\right).
	\end{multline}
	Let
	\[
	\bar N = -\mbox{$\frac{(2-2c-\alpha)}{2}$}\Delta {z^{(i)}}^\top J^\top J \Delta z^{(i)}+(1-c)(\Lambda_{\bar{p}} - \Lambda_{\bar{q}})^\top \Delta z^{(i)}.
	\]
	From~\eqref{eq:nonnegltdz}, Lemma~\ref{lem:dzne0}, and from the facts that $\alpha\in(0,~1]$, $c\in(0,1)$, and that matrix $J$ has full rank ($J^\top J > 0$), we infer that $\bar N$ must be negative for sufficiently small $\alpha$. Let 
	\[ 
	\bar M = \frac{1}{2}\Delta {z^{(i)}}^{\top} H  \Delta z^{(i)} +\mathcal{E}\left(\left\Vert \alpha\Delta z^{(i)}\right\Vert^3\right)
	\]
	Then~\eqref{eq:taylorJlambda} can be written as
	\begin{multline}
		f\left(z^{(i-1)}+\alpha \Delta z^{(i)}\right) - f\left(z^{(i-1)}\right) - c\alpha \nabla f\left(z^{(i-1)}\right)^\top \Delta z^{(i)} \\=\alpha \bar N +\alpha^2 \bar M. \label{eq:reducedtaylor}
	\end{multline} 
	Let $\alpha \bar N + \alpha^2 \bar M + \epsilon = 0$, or $\epsilon = \alpha \left(-\alpha \bar M - \bar N\right)$.
	Clearly, since $\bar N < 0$, there exists a value of $\alpha>0$ such that $\epsilon>0$.  This proves that there exists a positive value of $\alpha$ such that $\alpha \bar N + \alpha^2 \bar M  < 0$. Hence from~\eqref{eq:reducedtaylor},\\
	$
		f\left(z^{(i-1)}+\alpha \Delta z^{(i)}\right) - f\left(z^{(i-1)}\right) - c\alpha \nabla f\left(z^{(i-1)}\right)^\top \Delta z^{(i)}  <0$,	for a sufficiently small positive value of $\alpha$.
\end{proof}	

\begin{figure*}[t!]
	\addtocounter{equation}{1}
	\begin{equation}
		\label{eq:spyJ}
		\begin{array}{l} 
			\Jac{h_k}(z) = \nabla_z h_k(z_k,\phi_k)^\top =\\[8pt]

			\resizebox{0.95\hsize}{0.126\vsize}{$\left[\begin{array}{ccccccccc|ccccc}
					B_1^{(1)} & A_0^{(1)} & \bm{0} & \bm{0} &\cdots&&&  & &&  \cdots  & && \bm{0}\\
					B_2^{(2)} & A_1^{(2)} & B_1^{(2)} & A_0^{(2)} & \bm{0} &\cdots & &&   &&&    \cdots  && \bm{0}\\[8pt]
					&\vdots && \ddots &&  && \ddots &&&&&& \vdots \\[8pt]
					B_{N_{\mathrm{u}}}^{(N_{\mathrm{u}})} & A_{N_{\mathrm{u}}-1}^{(N_{\mathrm{u}})} &  & &  & \cdots&& B_1^{(N_{\mathrm{u}})} & A_0^{(N_{\mathrm{u}})} & \bm{0} & \cdots & && \bm{0}\\[4pt] \hline &&&&&&&&&&&&&\\[-2pt]
					B_{N_{\mathrm{u}} +1}^{(N_{\mathrm{u}}+1)} & A_{N_{\mathrm{u}}}^{(N_{\mathrm{u}}+1)} &   & \cdots  & & B_3^{(N_{\mathrm{u}}+1)} & A_2^{(N_{\mathrm{u}}+1)} & \sum\limits_{i=1}^{2}B_i^{(N_{\mathrm{u}}+1)} & A_1^{(N_{\mathrm{u}}+1)} & A_0^{(N_{\mathrm{u}}+1)} & \bm{0} & \cdots& &\bm{0}\\
					B_{N_{\mathrm{u}}+2}^{(N_{\mathrm{u}}+2)} & A_{N_{\mathrm{u}}+1}^{(N_{\mathrm{u}}+2)} & \ddots& & \cdots & B_4^{(N_{\mathrm{u}}+2)} & A_3^{(N_{\mathrm{u}}+2)} &  \sum\limits_{i=1}^{3}B_i^{(N_{\mathrm{u}}+2)} & A_2^{(N_{\mathrm{u}}+2)} & A_1^{(N_{\mathrm{u}}+2)} & A_0^{(N_{\mathrm{u}}+2)} & \bm{0} & \cdots & \bm{0}\\[8pt]
					&\vdots && \ddots  &&\ddots & &\vdots& &\vdots&& \ddots & &\bm{0}\\[8pt]
					B_{N_{\mathrm{p}}}^{(N_{\mathrm{p}})} & A_{N_{\mathrm{p}}-1}^{(N_{\mathrm{p}})} & & \cdots &  & B_{N_{\mathrm{p}}-N_{\mathrm{u}}+2}^{(N_{\mathrm{p}})}& A_{N_{\mathrm{p}}-N_{\mathrm{u}}+1}^{(N_{\mathrm{p}})} &  \sum\limits_{i=1}^{N_{\mathrm{p}}-N_{\mathrm{u}}+1}B_i^{(N_{\mathrm{p}})} & A_{N_{\mathrm{p}}-N_{\mathrm{u}}}^{(N_{\mathrm{p}})} & A_{N_{\mathrm{p}}-N_{\mathrm{u}}-1}^{(N_{\mathrm{p}})} & \cdots &  &  A_1^{(N_{\mathrm{p}})} & A_0^{(N_{\mathrm{p}})}
				\end{array}\right]$}
		\end{array}
	\end{equation}
\end{figure*}
\addtocounter{equation}{-2}
\begin{remark}
	In the case of linear MPC i.e., when $h_k(z_k,~\phi_k)$ is linear in~\eqref{eq:nllsbox}, the bounded-variable least-squares (BVLS) problem~\eqref{eq:bvls} is solved only once as the KKT conditions~\eqref{eq:kktnlp} coincide with~\eqref{eq:kkt}. Moreover, the backtracking steps are not required as the higher order terms in~\eqref{eq:taylor} are zero and Theorem~\ref{thm:converg} holds with $\alpha=1$ for any $c\in(0,~1)$.
\end{remark} 
\begin{remark} Referring to~\eqref{eq:taylorJlambda}, the value of $c$ is practically kept below $0.5$ in Algorithm~\ref{alg:bvnlls} in order to enforce fast convergence with full steps and is typically chosen to be as small as $10^{-4}$~\cite{Nocedal}. As seen in~\eqref{eq:taylorJlambda}, since we only need the matrix $J$ to be full rank for convergence of BVNLLS, the matrix $W_k$ in~\eqref{eq:nllsbox} may be rank-deficient as long as $J$ is full rank.
\end{remark} \begin{remark} Suboptimality in solving the BVLS subproblems may result in a smaller decrease in the cost between BVNLLS iterations than the theoretical decrease indicated by Theorem~\ref{thm:converg}. Hence, it is essential to have an accurate BVLS solver in order to have fast convergence.
	For this reason, we advocate the use of active-set methods to solve
	BVLS problems.\end{remark}

Each iteration of BVNLLS corresponds to solving a linear MPC problem, a special case of~\eqref{eq:nllsbox}. This allows to have a common framework for linear and nonlinear MPC in our approach. The BVLS problem~\eqref{eq:bvls} can be solved efficiently and accurately by using a primal active-set algorithm as shown in~\cite{bvlsTN}, which uses numerically robust recursive QR factorization routines to solve the LS subproblems. Unlike most of the QP solvers, in which the Hessian $J^\top J$ would be constructed via a matrix multiplication and then factorized, the BVLS solver~\cite{bvlsTN} only factorizes column subsets of $J$, whose condition number is square-root as compared to that of $J^\top J$, which makes it numerically preferable. In applications with very restrictive memory requirements, using the methods described in Section~\ref{sec:abstractmat} with the gradient-projection algorithm~\cite{nesterov} on the primal problem~\eqref{eq:bvls}, one may employ a matrix-free solver similar to~\cite{matrixfreeopt} and its references. However, when using the gradient-projection algorithm, its low memory usage may come at the cost of slow convergence due to their sensitivity to problem scaling. The following sections show how the Jacobian matrix $J$ can be replaced by using parameterized operators for saving memory and how its sparsity can be exploited for faster execution of the proposed BVLS solver of~\cite{bvlsTN}.
\section{Abstracting matrix instances}\label{sec:abstractmat}
\subsection{Problem structure}\label{sec:probstruct}
The sparse structure of matrices $W_k$ and $\nabla_z h_k(z_k,~\phi_k)^\top$, which form the Jacobian $J$ of the residual in~\eqref{eq:nllsbox}, completely depends on  the MPC tuning parameters, model order, and the ordering of the decision variables. 

Let us assume that there are no slack variables due to non-box inequality constraints~\eqref{eq:genineq}. In case of slack variables, the sparsity pattern will depend on the structure of Jacobian of the inequality constraints, which is not discussed in further detail here for conciseness. 
By ordering the decision variables in vector $z_k$ as follows
\begin{equation}
	\label{eq:zvec}
	\begin{array}{rcl}
		z_k &=& \left[u_k^{\top}\ y_{k+1}^{\top}\ u_{k+1}^{\top}\ y_{k+2}^{\top}\ \ldots\ u_{k+N_{\mathrm{u}}-1}^{\top}\ y_{k+N_{\mathrm{u}}}^{\top}\ \vline\right.\\
		&&\quad \left.  y_{k+N_{\mathrm{u}}+1}^{\top}\ \ldots\ y_{k+N_{\mathrm{p}}-1}^{\top}\ y_{k+N_{\mathrm{p}}}^{\top}\right]^{\top}
	\end{array} 
\end{equation}
we get the matrix structure described in~\eqref{eq:spyJ},
where the superscript of matrices in parentheses denote the output prediction
step the matrices refer to. Note that we dropped the parentheses $(S_k)$ in~\eqref{eq:spyJ} to simplify the notation and, as defined in~\eqref{eq:arxmimovec}, $A(S_k)_j = \bm{0}$, $\forall j>n_{\mathrm{a}}$, and $B(S_k)_j = \bm{0}$, $\forall j > n_{\mathrm{b}}$. Clearly, the Jacobian matrix
$\Jac{h_k}$ of equality constraints only consists of entries from the sequence of linear models of the form~\eqref{eq:arxmimovec} linearized around the initial guess trajectory. 
\addtocounter{equation}{1}
\begin{figure}[t!]
	\centering
	\includegraphics[width=\linewidth]{./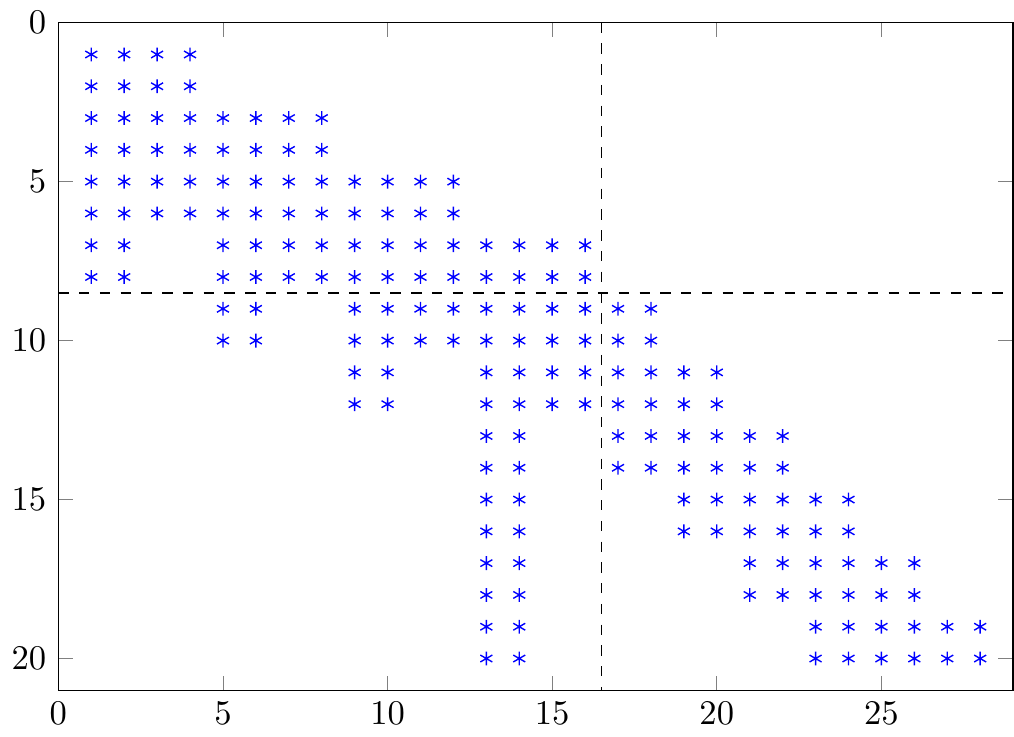}\vspace{-5pt}
	\caption{Sparsity pattern of $\Jac{h_k}$ for a random model with $N_{\mathrm{p}}=10$, $N_{\mathrm{u}}=4$, $n_{\mathrm{a}}=2$, $n_{\mathrm{b}}=4$, $n_{u}=2$ and $n_{y}=2$.}
	\label{fig:spyJhk}
\end{figure}
Considering the model parameters $n_{\mathrm{a}}, n_{\mathrm{b}}$ to be smaller than 
$N_{\mathrm{p}}$ in~\eqref{eq:spyJ}, as illustrated in Figure~\ref{fig:spyJhk}, we observe that the top-left part of $\Jac{h_k}$ is block sparse, the bottom-right part has a block-banded structure, the bottom-left part has dense columns corresponding to $u_{k+N_{\mathrm{u}}-1}$, whereas the top-right part is a zero matrix with  $n_y N_{\mathrm{u}}$ rows and $n_y\cdot(N_{\mathrm{p}}-N_{\mathrm{u}})$ columns. If $n_{\mathrm{a}}, n_{\mathrm{b}}$ are greater than $N_{\mathrm{p}}$, then $\Jac{h_k}$ would instead have its bottom-left part to be dense with block lower-triangular structure in its top-left and bottom-right parts. All in all, the sparsity pattern of $\Jac{h_k}$ is completely defined by the model parameters $n_u$, $n_y$, $n_{\mathrm{a}}$, $n_{\mathrm{b}}$, and MPC horizons $N_{\mathrm{u}}$, $N_{\mathrm{p}}$.
Clearly, evaluating $\Jac{h_k}$ only requires the sequence of linear models 
and the sparsity pattern information. Note that in case the linear models are computed
by a linearization function, a memory/throughput tradeoff
can be chosen here, as they can be either computed once and stored (lowest throughput), or evaluated by the linearization each time they are required
(lowest memory allocation).
Finally, recalling~\eqref{eq:nllsbox}, we obtain the full Jacobian matrix 
\[
J=\left[\begin{matrix}W_k\\\Jac{h_k} \end{matrix}\right]
\]
required in Algorithm~\ref{alg:bvnlls}, where $W_k$ is the
block diagonal matrix
\begin{eqnarray*}
	W_k&=&\blkdiag (W_{u_{k}},W_{y_{k+1}},W_{u_{k+1}},W_{y_{k+2}},\ldots,\\
	&&W_{u_{k+N_{\mathrm{u}}-1}},W_{y_{k+N_{\mathrm{u}}}},W_{y_{k+N_{\mathrm{u}}+1}},\ldots,W_{y_{k+N_{\mathrm{p}}}})
\end{eqnarray*}
In the sequel we assume for simplicity that all matrices $W_{u_{(\cdot)}},W_{y_{(\cdot)}}$ are diagonal, so that $W_k$ is actually a diagonal matrix.

\subsection{Abstract operators}\label{sec:absops}
All matrix-vector operations involving $J$ in Algorithm~\ref{alg:bvnlls}
and in the BVLS solver~\cite{bvlsTN}, including the matrix factorization routines that will be described in  Section~\ref{sec:sparseQR}, only need the product of a column subset of $J$ or a row-subset of $J^\top$ with a vector. 
Hence, rather than explicitly forming and storing $J$, all the operations involving $J$ can be represented by two operators \texttt{Jix} ($i$th column of $J$ times a scalar $x$) and \texttt{JtiX} ($i$th column of $J$ times a vector $X$) defined by Algorithms~\ref{alg:Jxi} and~\ref{alg:Jtxi}, respectively.

\begin{algorithm}[t!]
	\caption{Operator \texttt{Jix}}
	\begin{algorithmic}[1]
		\INPUT Output memory $v=\bm{0}\in \mathbb{R}^{n_z + N_{\mathrm{p}}n_y}$; 
		
		vector $w$ storing diagonal elements of $W_k$; scalar $x$; column number $i$; parameters $n_a,~n_b,~n_u,~n_y,~N_{\mathrm{u}}$ and $N_{\mathrm{p}}$.
		\algrule
		\STATE $v(i) \gets w(i)\cdot x$;	\label{step:firstnz}
		\STATE Find integers $\beta \in [0, N_{\mathrm{p}})$ and $\eta \in [1,~n_u+n_y]$ such that $i=\beta n_y + n_u\cdot\min(\beta,~N_{\mathrm{u}}-1) + \eta$;\label{step:findbeta}
		\STATE $\bar n \gets N_{\mathrm{u}}n_u+(N_{\mathrm{p}}+\beta)n_y$; $m \gets N_{\mathrm{u}}n_u + 2N_{\mathrm{p}}n_y$; $j \gets 0$;\label{step:barnm}
		
		\IF{$\beta \ne N_{\mathrm{u}}-1$ \textbf{or} $\eta>n_u$}
		\STATE \textbf{if } $\eta>n_u$, $\bar m \gets \bar n + n_a n_y +n_y$ \textbf{ else} $\bar m \gets \bar n + n_b n_y$; \label{step:barm}
		\FOR {$j'\in\{\bar n,\bar n+n_y,\cdots,\min(\bar m, m)-n_y\}$}\label{step:for1}
		\IF{$\eta > n_u$} $\forall j''\in\{1,\cdots,n_y\}$,
		\STATE		 $v(j'+j'') \gets x\cdot A^{(\beta + j+1)}_j(j'',~\eta - n_u)$; \label{step:etaoutput}
		\ELSE \STATE $v(j'+j'') \gets x\cdot B^{(\beta + j+1)}_{j+1}(j'',~\eta)$; \label{step:etainput}
		\ENDIF
		\STATE $j\gets j+1$;
		\ENDFOR
		\ELSE
		\FOR{$j'\in\{\bar n,\bar n+n_y,\cdots,m-n_y\}$}\label{step:for2}
		\STATE $j \gets j+1$;\label{step:formBbar1}
		\STATE $\bar B(j'') \gets \sum\limits_{i'=1}^{\min(j,~n_b)} B^{\beta+j}_{i'}(j'',~\eta)$, $\forall j''\in[1,n_y]$;\label{step:formBbar2}
		\STATE $v(j'+j'') \gets x\cdot \bar B(j'')$, $\forall j''\in[1, n_y]$;
		\ENDFOR
		\ENDIF
		\STATE \textbf{end.}
		\algrule
		\item[\textbf{Output:}] Vector $v=i{\text{th}}$ column of $J$ in~\eqref{eq:bvls} scaled by $x$.
	\end{algorithmic}
	\label{alg:Jxi}
\end{algorithm}
\begin{algorithm}[t!]
	\caption{Operator \texttt{JtiX}}
	\begin{algorithmic}[1]
		\INPUT Vector $w$ storing diagonal elements of $W_k$; vector $X$; column number $i$; parameters $n_a,~n_b,~n_u,~n_y,~N_{\mathrm{u}}$ and $N_{\mathrm{p}}$.
		\algrule
		\STATE $v' \gets w(i)\cdot X(i)$;	
		\STATE Steps~\ref{step:findbeta}-\ref{step:barnm} of Algorithm~\ref{alg:Jxi};
		\IF{$\beta \ne N_{\mathrm{u}}-1$ \textbf{or} $\eta>n_u$}
		\STATE \textbf{if } $\eta>n_u$, $\bar m \gets \bar n + n_a n_y + n_y$ \textbf{ else } $\bar m \gets \bar n + n_b n_y$;
		\FOR {$j'\in\{\bar n,\bar n+n_y,\cdots,\min(\bar m, m)-n_y\}$}
		\IF{$\eta > n_u$} $\forall j''\in\{1,\cdots,n_y\}$,
		\STATE		 $v' \gets v'+ X(j'+j'')\cdot A^{(\beta + j+1)}_j(j'',~\eta - n_u)$; \label{step:diffstep1}
		\ELSE \STATE $v' \gets v'+X(j'+j'')\cdot B^{(\beta + j+1)}_{j+1}(j'',~\eta)$; \label{step:diffstep2}
		\ENDIF
		\STATE $j\gets j+1$;
		\ENDFOR
		\ELSE
		\FOR{$j'\in\{\bar n,\bar n+n_y,\cdots,m-n_y\}$}
		\STATE Steps~\ref{step:formBbar1}-\ref{step:formBbar2} of Algorithm~\ref{alg:Jxi};
		\STATE $v' \gets v'+X(j'+j'')\cdot \bar B(j'')$, $\forall j''\in[1, n_y]$; \label{step:diffstep3}
		\ENDFOR
		\ENDIF
		\STATE \textbf{end.}
		\algrule
		\item[\textbf{Output:}] $v'=$ inner product of $i{\text{th}}$ row of $J^{\top}$ in~\eqref{eq:bvls} and $X$.
	\end{algorithmic}
	\label{alg:Jtxi}
\end{algorithm}
The basic principle of both Algorithms~\ref{alg:Jxi} and~\ref{alg:Jtxi} is to extract \nz entries indexed in $J$ from the corresponding model coefficients based on the given model and MPC tuning parameters. Since the top part $W_k$ of $J$ is a diagonal matrix, the first \nz entry in any column of $J$ is obtained from the vector of weights (cf.\ Step~\ref{step:firstnz} of \texttt{Jix} and~\texttt{JtiX}). The remaining steps only concern evaluating $\Jac h_k$ as in~\eqref{eq:spyJ}, in which the coefficients in each column match the corresponding element in $z_k$ as in~\eqref{eq:zvec}. Referring to the sparsity pattern of $\Jac h_k$ in~\eqref{eq:spyJ}, each of its columns only contains either model coefficients related to the input or to the output, and in the columns corresponding to the inputs $u_{k+N_{\mathrm{u}}-1}$ some of the input coefficients are summed due to the finite control horizon $N_{\mathrm{u}}<N_{\mathrm{p}}$. The location of the first \nz term in each column of $\Jac h_k$ depends on the corresponding stage of the input or output variable in prediction, whereas the last entry depends on $n_a$ or $n_b$ and $N_{\mathrm{p}}$. Hence, in Step~\ref{step:findbeta} of Algorithm~\ref{alg:Jxi}, the integer $\beta$ is computed such that $\beta n_y+1$ is the index of the first \nz entry in $\Jac h_k(z)$ (cf.\ Steps~\ref{step:barnm},~\ref{step:for1}~and~\ref{step:for2}). The integer $\eta$ computed in the same step denotes the input or output channel to which the column corresponds, in order to accordingly index and extract the coefficients to be scaled as shown in Steps~\ref{step:etaoutput},~\ref{step:etainput}~and~\ref{step:formBbar2} of Algorithm~\ref{alg:Jxi}. Depending on the column index $i$ of $J$, computing $\beta$ and $\eta$ only needs a trivial number of integer operations including at most one integer division, for instance, if $i\leq N_{\mathrm{u}}(n_u+n_y)$, $\beta$ is obtained by an integer division of $i$ by $(n_u+n_y)$ and $\eta = i - \beta (n_u+n_y)$. The same computation is straightforward for the only other possible case in which $i>N_{\mathrm{u}}(n_u+n_y)$. 

Clearly, since the rows of $J^{\top}$ are the columns of $J$, Algorithm~\ref{alg:Jtxi} differs from Algorithm~\ref{alg:Jxi} only in Steps~\ref{step:diffstep1},~\ref{step:diffstep2}~and~\ref{step:diffstep3} in which the scaled coefficient is accumulated to the resulting inner product instead of a plain assignment operation. It is possible to easily extend Algorithm~\ref{alg:Jtxi} for the special case in which $X$ in \texttt{JtiX} is the $i{\text{th}}$ column of $J$ i.e., to efficiently compute the 2-norm of the $i{\text{th}}$ column of $J$, which may be required in the \la routines. Replacing the instances of $J$ by \texttt{Jix} and \texttt{JtiX} in the BVNLLS and in the inner BVLS solver has the following advantages:

\noindent 1)	The problem construction step in MPC is eliminated, as matrix $J$ is neither formed nor stored.

\noindent 2) The code of the two operators does not change with any change in the required data or dimensions as all the indexing steps are parameterized in terms of MPC tuning parameters, i.e., known data. Hence, the resulting optimization solver does not need to be code-generated with a change in problem dimensions or data. The same fact also allows real-time changes in the MPC problem data and tuning parameters without any change in the solver.  A structural change in the BVNLLS optimization problem formulation, such as the type of performance index, is already decided in the MPC design phase and can be simply accommodated by only modifying Algorithms~\ref{alg:Jxi} and~\ref{alg:Jtxi}.

\noindent 3) Unlike basic sparse-matrix storage schemes~\cite{SparseMats} which would store the \nzs of $J$ along with indexing information, we only store the sequence of linear models at most, resulting in a significantly lower memory requirement. Alternatively, as mentioned earlier, even the coefficients $A^{(*)}_{*}$, $B^{(*)}_{*}$ can be generated during the execution of Algorithms~\ref{alg:Jxi}- \ref{alg:Jtxi} using linearization functions 
applied on the current trajectory.

\noindent 4) The number of floating-point operations (\textit{flops}) involving instances of $J$, both in the BVNLLS and the BVLS solvers, is minimal and is reduced essentially to what sparse \la routines can achieve.

\noindent 5) A matrix-free implementation can be achieved when using the gradient-projection algorithm~\cite{nesterov} to solve~\eqref{eq:bvls} in BVNLLS, as the operators
\texttt{Jix} and \texttt{JtiX} can be used for computing the gradient. In addition, considering that even the model coefficients are optional to store, the resulting NMPC algorithm will have negligible increase in memory requirement w.r.t.\ the prediction horizon.

\section{Recursive thin QR factorization}\label{sec:sparseQR}
The primal active-set method for solving BVLS problems described in~\cite{bvlsTN} efficiently solves a sequence of related LS problems using recursive thin QR factorization. The reader is referred to~\cite{bvlsTN,updateGS,GolubVLoan} for an overview on thin QR factorization and the recursive update routines in the context of the BVLS solver. This section shows how the sparsity of matrix $J$ can be exploited for significantly reducing the computations involved in the recursive updates of its QR factors, without the use of sparse-matrix storage or conventional sparse \la routines. The main idea is to have the location of \nzs in the matrix factors expressed in terms of model and MPC tuning parameters,
as described above. We first analyze how the sparse structure of column subsets of $J$ is reflected in their thin QR factors based on Gram-Schmidt orthogonalization,
then characterize the recursive update routines. 

\subsection{Gram-Schmidt orthogonalization}\label{sec:ortho}
Recall that $J\in\mathbb{R}^{m\times n}$, where $n=N_{\mathrm{u}}n_u+N_{\mathrm{p}}n_y$ and $m = n+N_{\mathrm{p}}n_y$, i.e., $m > n$ (see ~\eqref{eq:nllsbox},~\eqref{eq:bvls},~\eqref{eq:zvec} and~\eqref{eq:spyJ}).  Let $J_{\mathcal{F}}$ denote the matrix formed from those columns of $J$ with indices in the set $\mathcal{F}$.  Then there exists a unique thin QR factorization~\cite[Theorem 5.2.3]{GolubVLoan} of $J_{\mathcal{F}}$ which may be expressed via the Gram-Schmidt orthonormalization procedure $\forall i\in[1, \vert \mathcal{F} \vert]$ as 
\begin{subequations}\label{eq:thinQR}
	\begin{align}
		Q'_i &= J_{\mathcal{F}_i} - \sum\limits_{j=1}^{i-1}Q_jQ_j^{\top} J_{\mathcal{F}_i},\label{eq:CGS}\\
		Q_i &= Q'_{i} / \left\Vert Q_{i}^{'}\right\Vert_2, \label{eq:qnorm}\\
		R(j,i) &= Q_j^{\top}J_{\mathcal{F}_i}, \forall j\in[1,i-1],\label{eq:Rfact}\\
		R(i,i) &= \left\Vert Q^{'}_{i}\right\Vert_2,\label{eq:Rfactdiag}
	\end{align}
\end{subequations}
where $Q\in\mathbb{R}^{m\times \vert \mathcal{F}\vert} \coloneqq [Q_1, Q_2,\cdots, Q_{\vert\mathcal{F}\vert}]$ has orthonormal columns, $R\in\mathbb{R}^{\vert\mathcal{F}\vert\times\vert \mathcal{F}\vert}$ is upper triangular and $J_{\mathcal{F}} = QR$. In~\eqref{eq:thinQR}, with a slight abuse of notation, the subscripts denote column number, i.e., 
$Q_i$ denotes the $i{\text{th}}$ column of $Q$, whereas $\mathcal{F}_i$ denotes the $i{\text{th}}$ index in $\mathcal{F}$. As shown in~\eqref{eq:CGS}, starting from the first column of $J_{\mathcal{F}}$, the procedure constructs an orthogonal basis by sequentially orthogonalizing the subsequent columns w.r.t.\ the basis. The orthogonalization procedure shown in~\eqref{eq:CGS} is referred to as the classical Gram-Schmidt (CGS) method~\cite[Section 5.2.7]{GolubVLoan}. Since the CGS method is practically prone to numerical cancellation due to finite-precision arithmetic, we use the modified Gram-Schmidt (MGS) method~\cite[Section 5.2.8]{GolubVLoan} in which the orthogonalization is performed using the working value of $Q'_i$ instead of $J_{\mathcal{F}_i}$ in each iteration of the procedure. When applying MGS to solve the linear system before recursive updates, we also orthogonalize the right hand side (RHS) of the equations, i.e., we use an augmented system of equations in order to compensate the orthogonalization error (cf.\ \cite[Chapter 19]{Trefethen}). Moreover, for numerical robustness in limited precision, in the proposed MGS procedure a reorthogonalization step is automatically performed which iteratively refines the QR factors for reducing the orthogonalization error in case it exceeds a given threshold (cf.\ \cite[Algorithm 2]{bvlsTN},~\cite{updateGS}). 

\subsection{Sparsity analysis}\label{sec:sparsityThinQR}
In order to avoid redundant \textit{flops} due to multiplying zero entries while solving the LS problems without sparse storage schemes, we first determine the sparsity pattern of $Q$ and $R$ approximately, based on the relations described in~\eqref{eq:thinQR}. While doing so, the following notions will be used.
\begin{definition}[Nonzero structure] \label{def:nzs}
	We define the \nz structure of a vector $x$ to be the set of indices $\mathcal{S}(x)$ such that $x(i) \ne 0$, $\forall i\in \mathcal{S}(x)$, and $x(j) = 0$, $\forall j \notin \mathcal{S}(x)$.
\end{definition}
\begin{definition}[Predicted \nz structure] \label{def:pnzs}
	If $\hat{\mathcal{S}} (x)$ denotes the predicted \nz structure of a vector $x$, then $x(j) = 0~ \forall j \notin \hat{\mathcal{S}}(x)$ i.e., $\hat{\mathcal{S}} (x) \supseteq \mathcal{S}(x)$.
\end{definition}
Based on the Definition~\ref{def:nzs}, $x=x'$ implies \begin{equation}
	\label{eq:SxeqSxp}
	\mathcal S(x) = \mathcal S(x').
\end{equation}
\begin{align}
	\mathcal{S}(x'+x'') \subseteq \left\{\mathcal{S}(x') \cup \mathcal{S} (x'')\right\}, \label{eq:S1plusS2}
\end{align}
which holds with equality, i.e., $\mathcal{S}(x'+x'') = \left\{\mathcal{S}(x') \cup \mathcal{S} (x'')\right\}$, if and only if the set \\$\{i\vert x'(i)+x''(i) = 0, x'(i)\ne0, x''(i) \ne 0\} = \emptyset$. Likewise,
\begin{align*}
	\mathcal S(\kappa x) \subseteq \mathcal S(x), \kappa \in \mathbb{R},
\end{align*}
because $ \mathcal S(\kappa x) = \emptyset$ for $\kappa=0$ whereas, 
\begin{align}\label{eq:Sxkappane0}
	\mathcal S(\kappa x)~=~\mathcal S(x), \forall \kappa~\in~\mathbb{R}\setminus \{0\}. 
\end{align}
\begin{theorem}\label{thm:nnzQ}
	Consider an arbitrary sparse matrix $M \in \mathbb{R}^{n_1\times n_2}$ of full rank such that $n_1\ge n_2$ and let $\tilde Q$ denote the Q-factor from its thin QR factorization i.e., $M=\tilde Q \tilde R$. The \nz structure of each column $\tilde Q_i$ of $\tilde Q$ satisfies
	\begin{subequations}
		\label{eq:nnzQ}
		\begin{align}
			\mathcal S \left(\tilde Q_i\right) &\subseteq  \bigcup\limits_{j=1}^{i} \mathcal S \left(M_j\right), \forall i \in [1, n_2],\label{eq:nnzQ1}\\
			\text{and } \mathcal{S} \left(\tilde Q _1\right) &= \mathcal S\left(M_1\right).\label{eq:nnzQ0}
		\end{align}
	\end{subequations}
\end{theorem}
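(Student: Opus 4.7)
The plan is to prove this by induction on the column index $i$, using the explicit Gram-Schmidt construction in~\eqref{eq:thinQR} (which, by uniqueness of the thin QR factorization for a full-rank matrix, coincides with $\tilde Q\tilde R$). The key algebraic facts to invoke are: scaling by a nonzero scalar preserves the nonzero structure~\eqref{eq:Sxkappane0}; the nonzero structure of a sum is contained in the union of nonzero structures~\eqref{eq:S1plusS2}; and any expression of the form $Q_j Q_j^{\top} M_i$ is a scalar multiple of $Q_j$, so its nonzero structure is contained in $\mathcal S(Q_j)$.

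For the base case $i=1$, equation~\eqref{eq:CGS} gives $Q'_1 = M_1$ since the sum is empty, and then $\tilde Q_1 = Q'_1/\|Q'_1\|_2$ by~\eqref{eq:qnorm}. The full-rank assumption guarantees $M_1 \ne \bm 0$, so $\|Q'_1\|_2 \ne 0$, and applying~\eqref{eq:Sxkappane0} together with~\eqref{eq:SxeqSxp} yields $\mathcal S(\tilde Q_1) = \mathcal S(M_1)$, establishing~\eqref{eq:nnzQ0}.

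For the inductive step, assume $\mathcal S(\tilde Q_j) \subseteq \bigcup_{k=1}^{j} \mathcal S(M_k)$ for all $j < i$. From~\eqref{eq:CGS},
\[
Q'_i = M_i - \sum_{j=1}^{i-1} \bigl(Q_j^{\top} M_i\bigr) Q_j,
\]
where each coefficient $Q_j^{\top} M_i$ is a scalar. Thus $\mathcal S\bigl((Q_j^{\top}M_i)Q_j\bigr) \subseteq \mathcal S(\tilde Q_j) \subseteq \bigcup_{k=1}^{j}\mathcal S(M_k)$ by~\eqref{eq:Sxkappane0} and the induction hypothesis. Applying~\eqref{eq:S1plusS2} repeatedly gives
\[
\mathcal S(Q'_i) \subseteq \mathcal S(M_i) \cup \bigcup_{j=1}^{i-1}\mathcal S(\tilde Q_j) \subseteq \bigcup_{k=1}^{i}\mathcal S(M_k).
\]
Since $M$ has full rank, the columns $M_1,\ldots,M_i$ are linearly independent, so $Q'_i \ne \bm 0$ and hence $\|Q'_i\|_2 \ne 0$. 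The normalization~\eqref{eq:qnorm} together with~\eqref{eq:Sxkappane0} then gives $\mathcal S(\tilde Q_i) = \mathcal S(Q'_i) \subseteq \bigcup_{k=1}^{i}\mathcal S(M_k)$, completing the induction and establishing~\eqref{eq:nnzQ1}.

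There is no real obstacle here; the only subtlety worth flagging is why we obtain a containment rather than equality in~\eqref{eq:nnzQ1}: unlike the normalization step, the subtraction in~\eqref{eq:CGS} can produce unanticipated cancellations at individual indices, so the inclusion in~\eqref{eq:S1plusS2} may be strict. This is precisely why the theorem is phrased in terms of the \emph{predicted} nonzero structure of Definition~\ref{def:pnzs}, and it is the reason the statement yields only an upper bound on the fill-in of $\tilde Q$ rather than its exact sparsity pattern.
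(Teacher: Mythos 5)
Your proof is correct and follows essentially the same route as the paper's: both arguments run the Gram--Schmidt recursion~\eqref{eq:CGS}--\eqref{eq:qnorm}, use the facts~\eqref{eq:SxeqSxp}, \eqref{eq:S1plusS2}, and~\eqref{eq:Sxkappane0} to bound $\mathcal S(\tilde Q_i)$ by $\mathcal S(M_i)\cup\bigcup_{j<i}\mathcal S(\tilde Q_j)$, and then unwind this recursion down to the base case $\tilde Q'_1=M_1$. Your explicit induction is just a more formal phrasing of the paper's ``applying~\eqref{eq:SQrec} recursively,'' and your closing remark on why the inclusion can be strict matches the paper's later discussion of fill-in.
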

\begin{proof}
	We consider the Gram-Schmidt orthogonalization procedure described in~\eqref{eq:thinQR} applied to $M$ with $\mathcal F = [1, n_2]$ (this simplifies the notation, i.e., $\mathcal{F}_i = i$). 
	Referring to~\eqref{eq:qnorm},	since $\tilde Q'$ represents an orthogonal basis of the full rank matrix $M$ with real numbers, $1/\left\Vert\tilde Q_i'\right\Vert \ne 0$ $\forall i$, and hence from~\eqref{eq:Sxkappane0},
	\begin{equation}\label{eq:sq2eqstq2}
		\mathcal S\left(\tilde Q_i\right) = \mathcal S\left(\tilde Q'_i\right), \forall i.
	\end{equation}
	From~\eqref{eq:CGS},
	\begin{equation}\label{eq:q1}
		\tilde Q'_1 = M_1.
	\end{equation}
	Thus, considering~\eqref{eq:q1} with~\eqref{eq:SxeqSxp}~and~\eqref{eq:sq2eqstq2} proves~\eqref{eq:nnzQ0}. 
	Again, considering~\eqref{eq:CGS} with~\eqref{eq:sq2eqstq2} and~\eqref{eq:SxeqSxp},
	\begin{align}\label{eq:SQip}
		\mathcal S\left(\tilde Q_i\right) = \mathcal S\left(M_i - \sum\limits_{j=1}^{i-1}\tilde Q_j \tilde Q_j^{\top} M_i\right) = \mathcal S\left(M_i + \sum\limits_{j=1}^{i-1}\tilde Q_j \kappa_j\right)
	\end{align}
	where $\kappa_j = -\tilde Q_j^{\top}M_i \in \mathbb{R}, \forall j\in[1, i-1]$, as $\kappa_j$ represents the result of an inner product of two real vectors. From~\eqref{eq:S1plusS2} and~\eqref{eq:SQip},
	\begin{align}\label{eq:SQrec}
		\mathcal S\left(\tilde Q_i\right) \subseteq \left\{\mathcal S\left(M_i\right) \cup \left\{\bigcup\limits_{j=1}^{i-1}\mathcal S\left(\tilde Q_j\right)\right\}\right\}.
	\end{align}
	Applying~\eqref{eq:SQrec} recursively,
	\begin{align}\label{eq:SQpostrec}
		\mathcal S\left(\tilde Q_i\right) \subseteq \left\{\left\{ \bigcup\limits_{j=2}^{i} \mathcal S\left(M_j\right)\right\} \cup \mathcal S\left(\tilde Q_1\right)\right\}.
	\end{align}
	Thus, substituting~\eqref{eq:nnzQ0} in~\eqref{eq:SQpostrec} completes the proof.
\end{proof}
\begin{corollary}\label{corr:nnzR}
	Given $i\in [1, n_2]$ and $j' \in [1, n_2]$,
	\begin{equation}
		\text{if } \left\{\bigcup\limits_{j=1}^{j'} \mathcal S \left(M_j\right)\right\} \cap \mathcal{S} \left(M_i\right) = \emptyset, \text{then } \tilde R(j, i) = 0~ \forall j \in [1, j']. \label{eq:nnzR}
	\end{equation}	
\end{corollary}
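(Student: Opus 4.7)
The plan is to reduce everything to the formula $\tilde R(j,i) = \tilde Q_j^\top M_i$ from~\eqref{eq:Rfact} and observe that the disjoint-support hypothesis, combined with Theorem~\ref{thm:nnzQ}, forces the vectors $\tilde Q_j$ and $M_i$ to have disjoint nonzero structures, so their inner product vanishes.

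First I would dispose of the range of indices. For $j \in [1, j']$ with $j \ge i$, entries $\tilde R(j,i)$ either sit strictly above the diagonal (if $j > i$), in which case $\tilde R(j,i) = 0$ automatically by upper-triangularity, or on the diagonal (if $j = i$, only possible when $j' \ge i$), in which case the hypothesis $\left\{\bigcup_{j=1}^{j'} \mathcal S(M_j)\right\} \cap \mathcal S(M_i) = \emptyset$ would contain $\mathcal S(M_i) \cap \mathcal S(M_i) = \mathcal S(M_i)$, forcing $M_i = \bm 0$ and contradicting the full-rank assumption in Theorem~\ref{thm:nnzQ}. So the substantive case is $j \in [1, \min(j', i-1)]$, where~\eqref{eq:Rfact} applies.

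Fix such a $j$. By Theorem~\ref{thm:nnzQ},
\begin{equation*}
\mathcal S(\tilde Q_j) \subseteq \bigcup_{k=1}^{j} \mathcal S(M_k) \subseteq \bigcup_{k=1}^{j'} \mathcal S(M_k),
\end{equation*}
which by the hypothesis is disjoint from $\mathcal S(M_i)$. Expanding $\tilde Q_j^\top M_i = \sum_{\ell} \tilde Q_j(\ell)\, M_i(\ell)$, every term has at least one factor equal to zero, so the sum is zero, and~\eqref{eq:Rfact} yields $\tilde R(j,i) = 0$. Combining with the trivial upper-triangular case above completes the proof.

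I do not anticipate any real obstacle: the corollary is essentially a bookkeeping consequence of Theorem~\ref{thm:nnzQ} once one notices that the only non-vacuous regime is $j' < i$, which is enforced by the full-rank assumption rather than stated explicitly in the hypothesis.
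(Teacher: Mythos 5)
Your proof is correct and follows essentially the same route as the paper's: bound $\mathcal S(\tilde Q_j)$ via Theorem~\ref{thm:nnzQ} by $\bigcup_{k=1}^{j}\mathcal S(M_k)$, invoke the disjointness hypothesis, and conclude $\tilde R(j,i)=\tilde Q_j^\top M_i=0$ from~\eqref{eq:Rfact}. Your explicit treatment of the regime $j\geq i$ (showing the hypothesis forces $j'<i$ via the full-rank assumption) is a small additional care the paper leaves implicit, though note that for $j>i$ the entry $\tilde R(j,i)$ lies \emph{below} the diagonal, not above it.
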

\begin{proof}
	Based on result~\eqref{eq:nnzQ1} of Theorem~\ref{thm:nnzQ}, we can say that $\bigcup\limits_{j=1}^{i} \mathcal S \left(M_j\right)$ is a predicted \nz structure of $\tilde Q_i$ i.e., 
	\begin{equation}
		\bigcup\limits_{j=1}^{i} \mathcal S \left(M_j\right) = \hat{\mathcal S} \left(\tilde Q_i\right), \label{eq:approxnnzQMs}
	\end{equation}
	and hence
	\begin{equation}
		\hat{\mathcal S} \left(\tilde Q_i\right) =  \mathcal S \left(M_i\right) \cup \hat{\mathcal S}\left(\tilde Q_{i-1}\right), \forall i \in [1, n_2].\label{eq:approxnnzQ}
	\end{equation}
	If $\mathcal{S}\left(\tilde Q_j\right) \cap \mathcal{S} \left( M_i \right)= \emptyset$, then $\tilde Q_j$ and $M_i$ have disjoint \nz structures and hence, referring to~\eqref{eq:Rfact}, 
	\begin{equation}\label{eq:Rji0relation1} \mathcal{S}\left(\tilde Q_j\right) \cap \mathcal{S} \left( M_i \right)= \emptyset \implies R(j,i) = \tilde Q_j ^{\top} M_i = 0.\end{equation} From~\eqref{eq:approxnnzQ} we have that \begin{equation}
		\hat{\mathcal{S}}\left(\tilde Q_i\right) \supseteq \hat{\mathcal{S}}\left(\tilde Q_{i'}\right), \forall i'<i.\label{eq:Qim1subQi}
	\end{equation} 

	From~\eqref{eq:approxnnzQMs},~\eqref{eq:Qim1subQi} and Definition~\ref{def:pnzs}, i.e.,  $\hat{\mathcal{S}}\left(\tilde Q_i\right) \supseteq \mathcal S \left(\tilde Q_i\right)$, it follows that\\ $\left\{\bigcup\limits_{j=1}^{j'} \mathcal S \left(M_j\right)\right\} \cap \mathcal{S} \left(M_i\right) = \emptyset$ implies $\hat{\mathcal{S}} \left(\tilde Q_j\right) \cap \mathcal S\left(M_i\right) = \emptyset$, $\forall j<j'$. The corollary result is then immediate given~\eqref{eq:Rji0relation1}. 
\end{proof}
Theorem~\ref{thm:nnzQ} and Corollary~\ref{corr:nnzR} establish rigorous upper bounds on the \nz structure of the QR factors based on the \nz structure of the factorized matrix. 

Since the \nz structure of $J_{\mathcal{F}}$ is completely determined in terms of model and tuning parameters as shown in Section~\ref{sec:absops}, the predicted \nz structure of its QR factors consequently depends only on them, as will be shown in the remaining part of this section.  
\begin{corollary}\label{corr:nnzJFQ}
	Consider the matrix $J\in\mathbb{R}^{m\times n}$ whose first $n$ rows form a diagonal matrix and the last $m-n$ rows contain $\Jac h_k(z)$ as shown in~\eqref{eq:spyJ}. Let $J_{\mathcal{F}}$ denote the matrix formed from the columns of $J$ indexed in the index set $\mathcal{F}$ such that $\mathcal{F}_{i+1}>\mathcal{F}_i, \forall i\in[1,\vert \mathcal{F} \vert ]$. If $Q\in\mathbb{R}^{m\times\vert\mathcal{F}\vert}$ denotes the Q-factor from the thin QR factorization of $J_{\mathcal{F}}$, then,\\ $\forall i\in[2,\vert \mathcal{F} \vert ]$, $\left\{\bigcup\limits_{j=1}^{i}\left\{\mathcal{F}_j\right\}\right\} \cup \left(\bar n_{\mathcal{F}_1},  \max\left(\mathcal{B}_{i-1}, \min\left(\bar m_{\mathcal{F}_i}, m\right)\right)\right] =\hat{\mathcal S} \left(Q_i\right)$, where the positive integers $\bar n_{j'}$, $\bar m_{j'}$ respectively denote the values of $\bar n$, $\bar m$ computed in Steps~\ref{step:findbeta}-\ref{step:barm} of Algorithm~\ref{alg:Jxi} for $j'{\text{th}}$ column of $J$, and $\mathcal B$ is an index set such that its $i{\text{th}}$ element stores the largest index of $\hat{\mathcal S}\left(Q_i\right)$.
\end{corollary}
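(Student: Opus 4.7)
The plan is to combine Theorem~\ref{thm:nnzQ}, which bounds $\mathcal S(Q_i)$ by the union of the sparsity patterns of $J_{\mathcal F_1},\ldots,J_{\mathcal F_i}$, with an explicit reading of each column's sparsity straight off Algorithm~\ref{alg:Jxi}. The natural structure is induction on $i$, with the base case $i=1$ supplied by~\eqref{eq:nnzQ0}.

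First I would extract the nonzero structure of the generic column $J_{\mathcal F_j}$ directly from Algorithm~\ref{alg:Jxi}. Step~\ref{step:firstnz} produces a single nonzero at row $\mathcal F_j$ coming from the diagonal block $W_k$, and the for-loops in Steps~\ref{step:for1} and~\ref{step:for2} fill contiguous rows from $\bar n_{\mathcal F_j}+1$ up to $\min(\bar m_{\mathcal F_j},m)$ (interpreting $\bar m$ in the $u_{k+N_{\mathrm{u}}-1}$ else-branch as large enough that $\min(\bar m,m)=m$). Hence
\[
\mathcal S(J_{\mathcal F_j}) \subseteq \{\mathcal F_j\}\cup(\bar n_{\mathcal F_j},\min(\bar m_{\mathcal F_j},m)],
\]
and invoking~\eqref{eq:nnzQ1} yields
\[
\mathcal S(Q_i) \subseteq \{\mathcal F_1,\ldots,\mathcal F_i\}\cup\bigcup_{j=1}^{i}(\bar n_{\mathcal F_j},\min(\bar m_{\mathcal F_j},m)].
\]

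Next I would collapse the union of intervals into a single interval. Because $\mathcal F$ is strictly increasing and the integer $\beta$ computed at Step~\ref{step:findbeta} is non-decreasing in the column index of $J$ (this requires a short case analysis over the $\beta<N_{\mathrm{u}}$ regime, where both inputs and outputs are decision variables, and the $\beta\ge N_{\mathrm{u}}$ regime with only outputs), the quantity $\bar n = N_{\mathrm{u}}n_u+(N_{\mathrm{p}}+\beta)n_y$ from Step~\ref{step:barnm} is also non-decreasing. Therefore $\bar n_{\mathcal F_j}\ge \bar n_{\mathcal F_1}$ for every $j$, which embeds each constituent interval into $(\bar n_{\mathcal F_1},\max_{j=1}^{i}\min(\bar m_{\mathcal F_j},m)]$, producing the interval that appears on the left-hand side of the claim.

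Finally, the induction identifies the right endpoint with $\max(\mathcal B_{i-1},\min(\bar m_{\mathcal F_i},m))$. Since $\bar n=n+\beta n_y\ge n$, the interval $(\bar n_{\mathcal F_1},\cdot]$ lives in rows strictly above $n$, whereas each column index satisfies $\mathcal F_j\le n$; consequently the discrete points $\{\mathcal F_1,\ldots,\mathcal F_{i-1}\}$ are all strictly smaller than any element of the interval part, so the largest index in $\hat{\mathcal S}(Q_{i-1})$ is exactly the interval's right endpoint, i.e.\ $\mathcal B_{i-1}=\max_{j=1}^{i-1}\min(\bar m_{\mathcal F_j},m)$. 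The recursion $\max_{j=1}^{i}\min(\bar m_{\mathcal F_j},m)=\max(\mathcal B_{i-1},\min(\bar m_{\mathcal F_i},m))$ then closes the argument. The main obstacle is not deep mathematics but careful bookkeeping: verifying monotonicity of $\bar n$ cleanly across the two branches of Step~\ref{step:findbeta}, and handling the special treatment of the $u_{k+N_{\mathrm{u}}-1}$ columns (the else branch at Step~\ref{step:for2}, where $\bar m$ is not explicitly assigned and the loop ranges up to $m-n_y$) so that the formula $\min(\bar m_{\mathcal F_j},m)$ remains valid uniformly.
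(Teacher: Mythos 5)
Your proposal is correct and follows essentially the same route as the paper: read off $\mathcal S(J_{\mathcal F_j}) = \{\mathcal F_j\}\cup(\bar n_{\mathcal F_j},\min(\bar m_{\mathcal F_j},m)]$ from Algorithm~\ref{alg:Jxi}, invoke Theorem~\ref{thm:nnzQ} via~\eqref{eq:approxnnzQMs}, collapse the union of intervals using the monotonicity $\mathcal F_{j+1}>\mathcal F_j \Rightarrow \bar n_{\mathcal F_j}\le\bar n_{\mathcal F_{j+1}}$, and close with the recursion $\mathcal B_i=\max(\mathcal B_{i-1},\min(\bar m_{\mathcal F_i},m))$. The only cosmetic difference is that you state the column sparsity as an inclusion rather than the equality the paper uses to identify $\hat{\mathcal S}(Q_i)$ exactly with $\bigcup_{j=1}^i\mathcal S(J_{\mathcal F_j})$, but this does not affect the argument.
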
	
\begin{proof}	
	Considering the structure of matrix $J$, Definition~\ref{def:nzs} and the fact that  $\min\left(\bar m_j, m\right) > \bar n_j \ge n \ge \vert \mathcal F \vert$, $\forall j$ by construction, we have that
	\begin{equation}
		\label{eq:nnzJi}
		\mathcal S \left( J_{\mathcal F_i}\right) = \left\{\mathcal F_i\right\} \cup  \left(\bar n_{\mathcal F_i}, \min\left(\bar m_{\mathcal F_i},m\right) \right].
	\end{equation}
	From~\eqref{eq:approxnnzQMs} we note that $\bigcup\limits_{j=1}^{i} \mathcal S \left(J_{\mathcal F_j}\right) = \hat{\mathcal S}\left(Q_i\right)$, and using~\eqref{eq:nnzJi} we can rewrite 
	\begin{align}
		\hat{\mathcal S}\left(Q_i\right)	 &= \bigcup\limits_{j=1}^{i} \mathcal S \left(J_{\mathcal F_j}\right)\notag\\
		&=\left\{\bigcup\limits_{j=1}^{i}\left\{\mathcal{F}_j\right\}\right\} \cup \left\{\bigcup\limits_{j=1}^{i}\left(\bar n_{\mathcal F_j}, \min\left(\bar m_{\mathcal F_j},m\right) \right]\right\}\notag,\\
		&=\left\{\bigcup\limits_{j=1}^{i}\left\{\mathcal{F}_j\right\}\right\} \cup \left(\bar n_{\mathcal{F}_1},  \mathcal{B}_{i}\right]\label{eq:pnzsQiBi},
	\end{align}
	because observing~\eqref{eq:spyJ}, $\mathcal{F}_{j+1}>\mathcal{F}_j$ implies $\bar n_{\mathcal F_j}\leq \bar n _{\mathcal F_{j+1}}$. From result~\eqref{eq:approxnnzQ},~\eqref{eq:nnzJi} and definition of set $\mathcal B$,
	\begin{equation}\label{eq:recsetB}
		\mathcal B_i = \max\left(\mathcal{B}_{i-1}, \min\left(\bar m_{\mathcal{F}_i}, m\right)\right),
	\end{equation}
	which on substitution in~\eqref{eq:pnzsQiBi} completes the proof.
\end{proof}
Note that from~\eqref{eq:nnzJi} and result~\eqref{eq:nnzQ0},
\begin{equation}\label{eq:nnzJF1}
	\mathcal S\left(Q_1\right) = \mathcal S\left(J_{\mathcal{F}_1}\right) = \left\{\mathcal{F}_1, \left(\bar n_{\mathcal{F}_1}, \min\left(\bar m_{\mathcal{F}_1}, m\right)\right]\right\}.
\end{equation}
By definition of set $\mathcal B$ we have $\mathcal B_1 = \min\left(\bar m_{\mathcal{F}_1}, m\right)$ from~\eqref{eq:nnzJF1}, and hence $\mathcal{B}_i$ can be determined $\forall i$ by using~\eqref{eq:recsetB}.
\begin{corollary}\label{corr:zerorowsJFQ} $Q(i, j) = 0~ \forall i\in [1, n]\setminus \mathcal{F}, \forall j \in[1, \vert\mathcal{F}\vert]$. Also, $\forall j'\in [1, \vert\mathcal{F}\vert]$, $Q(i, j)=0~ \forall j \in [1, j')$ such that $i=\mathcal{F}_{j'}$.
\end{corollary}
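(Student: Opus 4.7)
The plan is to derive both assertions as immediate consequences of Corollary \ref{corr:nnzJFQ} and Definition \ref{def:pnzs}, once I establish the key inequality $\bar n_{\mathcal{F}_1} \geq n$. This inequality can be read directly off Step \ref{step:barnm} of Algorithm \ref{alg:Jxi}: there $\bar n = N_{\mathrm{u}} n_u + (N_{\mathrm{p}} + \beta) n_y$ with $\beta \geq 0$, while $J$ has $n = N_{\mathrm{u}} n_u + N_{\mathrm{p}} n_y$ columns, so $\bar n_{\mathcal{F}_j} \geq n$ for every $j$. Consequently the ``interval part'' $(\bar n_{\mathcal{F}_1}, \mathcal{B}_j]$ of the predicted nonzero structure described in Corollary \ref{corr:nnzJFQ} sits entirely in $(n, m]$, so the only indices of $\hat{\mathcal S}(Q_j)$ that can fall in $[1,n]$ are the elements of $\{\mathcal{F}_1, \ldots, \mathcal{F}_j\}\subseteq \mathcal{F}$.

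For the first claim, I would fix $i \in [1,n] \setminus \mathcal{F}$ and arbitrary $j \in [1, |\mathcal{F}|]$. Since $i$ does not coincide with any $\mathcal{F}_\ell$ and $i \leq n < \bar n_{\mathcal{F}_1}$, the observation above shows $i \notin \hat{\mathcal S}(Q_j)$, and Definition \ref{def:pnzs} gives $Q(i,j) = 0$. For the second claim, I would fix $j' \in [1, |\mathcal{F}|]$ and any $j < j'$, and set $i = \mathcal{F}_{j'}$. The strict monotonicity $\mathcal{F}_{\ell+1} > \mathcal{F}_\ell$ assumed in Corollary \ref{corr:nnzJFQ} implies $\mathcal{F}_{j'} \notin \{\mathcal{F}_1, \ldots, \mathcal{F}_j\}$, and since $\mathcal{F}_{j'} \leq n < \bar n_{\mathcal{F}_1}$ it also misses the interval $(\bar n_{\mathcal{F}_1}, \mathcal{B}_j]$; hence $\mathcal{F}_{j'} \notin \hat{\mathcal S}(Q_j)$ and again $Q(\mathcal{F}_{j'}, j) = 0$.

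The only subtlety, and hardly an obstacle, is that Corollary \ref{corr:nnzJFQ} is stated for $i \geq 2$; the boundary case $j = 1$ has to be dispatched separately using \eqref{eq:nnzJF1}, which gives $\hat{\mathcal S}(Q_1) = \{\mathcal{F}_1\} \cup (\bar n_{\mathcal{F}_1}, \min(\bar m_{\mathcal{F}_1}, m)]$ and fits the same pattern (note that the second assertion is vacuous for $j' = 1$). Apart from this small bookkeeping item, the whole argument is a short structural deduction from the sparsity pattern of $J$ derived in Section \ref{sec:probstruct}, and requires no further analytic input.
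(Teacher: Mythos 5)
Your proposal is correct and follows essentially the same route as the paper: both arguments reduce to the observation that, since $\bar n_{\mathcal{F}_1}\ge n$ and the interval part of $\hat{\mathcal S}(Q_j)$ from Corollary~\ref{corr:nnzJFQ} is open at $\bar n_{\mathcal{F}_1}$, the only indices of $\hat{\mathcal S}(Q_j)$ lying in $[1,n]$ are $\mathcal{F}_1,\ldots,\mathcal{F}_j$, after which both claims follow from Definition~\ref{def:pnzs} and the strict monotonicity of $\mathcal{F}$. Your treatment is if anything slightly more careful than the paper's, since you explicitly dispatch the $j=1$ boundary case via~\eqref{eq:nnzJF1}, which the paper leaves implicit.
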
	
\begin{proof}Let $Q''$ denote the submatrix formed from the first $n$ rows of $Q$. Since $\bar n _{{\mathcal F}_1} > n$, from Corollary~\ref{corr:nnzJFQ} we can write $\bigcup\limits_{j=1}^{i}\left\{\mathcal{F}_j\right\}=\hat{\mathcal{S}}\left(Q''_i\right)$. Thus, referring this relation and Definition~\ref{def:pnzs}, if an index is not in the set $\mathcal F$, the corresponding row of $Q''$ and hence $Q$ has no \nz element. The latter part is proved by~\eqref{eq:approxnnzQ} considering the facts that $J$ is diagonal and $\mathcal F_{i+1}> \mathcal{F}_i$.
\end{proof}	
From Corollaries~\ref{corr:nnzJFQ}~and~\ref{corr:zerorowsJFQ} we infer that the \nz structure of all the $\vert\mathcal F \vert$ columns of $Q$ can be stored using a scalar for $\bar n_{\mathcal{F}_1}$ and two integer vectors of dimension $\vert \mathcal F \vert$ containing the index sets $\mathcal{F}$ and $\mathcal{B}$, where\\ $\mathcal B _i = \max\left(\min\left(\bar m_i, m\right), \bar m_{i-1}\right)$. In order to only compute the \nzs of $R$, while constructing each of its column, we need to find and store a scalar $j'$ as shown in Corollary~\ref{corr:nnzR}. This is done by using the relations described in Theorem~\ref{thm:nnzQ}, Corollary~\ref{corr:nnzR} and~\eqref{eq:pnzsQiBi}. Specifically, when computing the $i{\text{th}}$ column of $R$ ($i>1$), $j'$ is found by counting the number of times $B_j<\bar n_{\mathcal F _i}$ for increasing values of $j\in\left(\hat{j},i\right)$ until the condition is not satisfied, where $\hat j$ denotes the value of $j'$ for the $(i-1){\text{th}}$ column of $R$.

\subsection{Recursive updates}\label{sec:recQR}
In the primal active-set method, a change in the active-set corresponds to an index inserted in or deleted from the set~$\mathcal{F}$. We exploit the uniqueness of thin QR factorization in order to update the structure indicating sets $\mathcal F$ and $\mathcal B$. When an index $t$ is inserted in the active-set of bounds, the last column of $Q$ and the $i{\text{th}}$ column of $R$ are deleted such that $t=\mathcal{F}_i$, and the QR factorization is updated by applying Given's rotations that triangularize $R$. In this case $\mathcal F$ is simply updated to $\mathcal F' = \mathcal F \setminus \{t\}$ and $\mathcal B$ is updated such that~\eqref{eq:recsetB} is satisfied after removing its $i{\text{th}}$ index.  Morover, using Corollary~\ref{corr:zerorowsJFQ}, the Given's rotations are not applied on the $t{\text{th}}$ row of $Q$ which is simply zeroed. On the other hand, when an index $t$ is removed from the active-set of bounds, $\mathcal F$ is updated to $\mathcal F \cup \{t\}$ such that $\mathcal F _{j+1}>\mathcal F_j$, $\forall j$. If $t$ is inserted in $\mathcal F$ in the $j{\text{th}}$ position, an index is inserted in the $j{\text{th}}$ position of $\mathcal B$ using~\eqref{eq:recsetB} and the elements with position greater than $j$ are updated to satisfy~\eqref{eq:recsetB}.  Since the sparse structure of the updated QR factors is known during recursive updates, using $\mathcal F$, $\mathcal B$ and Corollary~\ref{corr:zerorowsJFQ}, the \textit{flops} for applying Given's rotations on rows of $Q$  and matrix-vector multiplications in the Gram-Schmidt (re)orthogonalization procedure are performed only on \nz elements. This makes the QR update routines significantly faster as is reflected in the numerical results described in Section~\ref{sec:results}. 
\subsection{Advantages and limitations}
The predicted \nz structure of the Q-factor via~\eqref{eq:approxnnzQ} is exact if and only if the set relation~\eqref{eq:nnzQ1} holds with equality. For~\eqref{eq:nnzQ1} to hold with equality for $Q$, $Q^{\top}_j J_{\mathcal {F}_i}$ must be \nz for all pairs of indices $i$ and $j$ referring the CGS orthogonalization in~\eqref{eq:CGS} and moreover the summation of \nzs in the RHS of~\eqref{eq:CGS} must result in a \nz. Even though theoretically this may not be the case for the matrices that we consider, due to finite precision computations which disallow perfect orthogonality, and the use of MGS with potentially multiple orthogonalizations to compute columns of $Q$, the predicted \nz structure of columns of $Q$ via Corollary~\ref{corr:nnzJFQ} rarely contains indices of zero elements, i.e., numerically it is an accurate estimate and often the exact \nz structure. Referring to Corollary~\ref{corr:nnzR} and~\cite[Algorithm 2]{bvlsTN}, the same fact leads to the conclusion that if multiple orthogonalizations (for numerical robustness) are performed, in the worst case, the upper-triangular part of the $R$ factor may have no zero elements. Nevertheless, the initial sparsity in $R$ before reorthogonalization is still exploited in its construction but the worst-case fill-in makes it necessary to use $R$ as a \textit{dense} upper-triangular matrix when solving the triangular system by back-substitution to compute the solution of the underlying LS problem.

From Theorem~\ref{thm:nnzQ}, we observe that the predicted \nz structure of columns $Q_j, \forall j\ge i$, would contain at least the indices of \nz elements in the $i{\text{th}}$ column of $J_{\mathcal F}$. Hence, in case $N_{\mathrm{u}}<N_{\mathrm{p}}$, referring the analysis in Section~\ref{sec:sparsityThinQR}, the fill-in of $Q$ can be reduced by a re-ordering of the decision variable vector in~\eqref{eq:zvec} such that the columns of $J$ corresponding to the variables $u_{k+N_{\mathrm{u}}-1}$ are moved to become its last columns. Note that even though this re-ordering does not optimize the fill-in of $Q$, for which dedicated routines exist in literature (cf.\ \cite{colamd}), it still allows a relatively simple and a computationally effective implementation of recursive thin QR factorization for the matrix of interest through a straightforward extension of the methods described in Section~\ref{sec:recQR}.

In order to benefit computationally from the recursive updates, a full storage of the thin QR factors is required. This causes greater memory requirement beyond a certain large problem size where a sparse-storage scheme would need smaller memory considering that with conventional sparse linear algebra, one would only compute and store the R factor while always solving the LS problem from scratch instead. However, the latter approach could turn out to be computationally much more expensive. Using the techniques discussed in Sections~\ref{sec:sparsityThinQR} and~\ref{sec:recQR} with a sparse-storage scheme could address this limitation specific to large-scale problems for memory-constrained applications but it needs a much more intricate implementation with cumbersome indexing, that is beyond the scope of this paper. 
\section{Numerical results}\label{sec:results}
\subsection{Software framework}
In order to implement the (nonlinear) MPC controller based on formulation~\eqref{eq:nllsbox} or~\eqref{eq:ALMbvnlls}, one only needs the code for Algorithm~\ref{alg:bvnlls}. The inner BVLS solver of~\cite{bvlsTN} could be  replaced by another algorithm that exploits sparsity via the abstract operators, such as the gradient-projection algorithm of~\cite{nesterov} we mentioned earlier. Besides, routines that evaluate the model~\eqref{eq:model} and the Jacobian matrices, i.e., the model coefficients in~\eqref{eq:arxmimovec}, are required from the user in order to evaluate the residual and perform the linearization step (or alternatively finite-differences) in BVNLLS. Note that an optimized self-contained code for these routines can easily be generated or derived by using symbolic tools such as those of MATLAB or the excellent open-source software CasADi~\cite{casadi}. This signifies that except for the user-defined model and tuning parameters, the software does not need any code-generation, as for a given class of performance indices the code for Algorithms~\ref{alg:bvnlls}-\ref{alg:Jtxi} does not change with the  application. 

The user is only required to provide the MPC tuning parameters and a symbolic expression for the model~\eqref{eq:model}, which eases the deployment of the proposed MPC solution algorithm in embedded control hardware. 
\subsection{Computational performance}
The results presented in this section are based on a library-free C implementation of BVNLLS based on Algorithms~\ref{alg:Jxi} and~\ref{alg:Jtxi}, and the BVLS solver based on the recursive thin QR factorization routines discussed in Section~\ref{sec:sparseQR}. The reader is referred to~\cite[Section 5]{nmpcbvls} for details on simulation settings, tuning parameters, constraints, and benchmark solvers related to the following discussion on the example problem, which consists of NMPC applied to a CSTR (continuous stirred tank reactor). All the optimization problems in the simulations referred below were solved until convergence\footnote{the optimality and feasibility tolerances for all solvers were tuned to $10^{-6}$ and $10^{-8}$ respectively, in order to achieve the same quality of solution at convergence for a fair comparison.}, on a Macbook Pro 2013 equipped with 8GB RAM and 2.6 GHz Intel Core i5 processor. Figure~\ref{fig:performance} illustrates the specific simulation scenario for which the execution times of the solvers were compared for increasing values of the prediction horizon. Figure~\ref{fig:conviol} shows that the equality constraints were satisfied with sufficient accuracy by BVNLLS. Hence, as also demonstrated in~\cite[Section 5, Figure 2]{nmpcbvls}, all the solvers including the proposed one yield the same control performance.

\begin{figure}[t!]
	\centering
	\includegraphics[]{./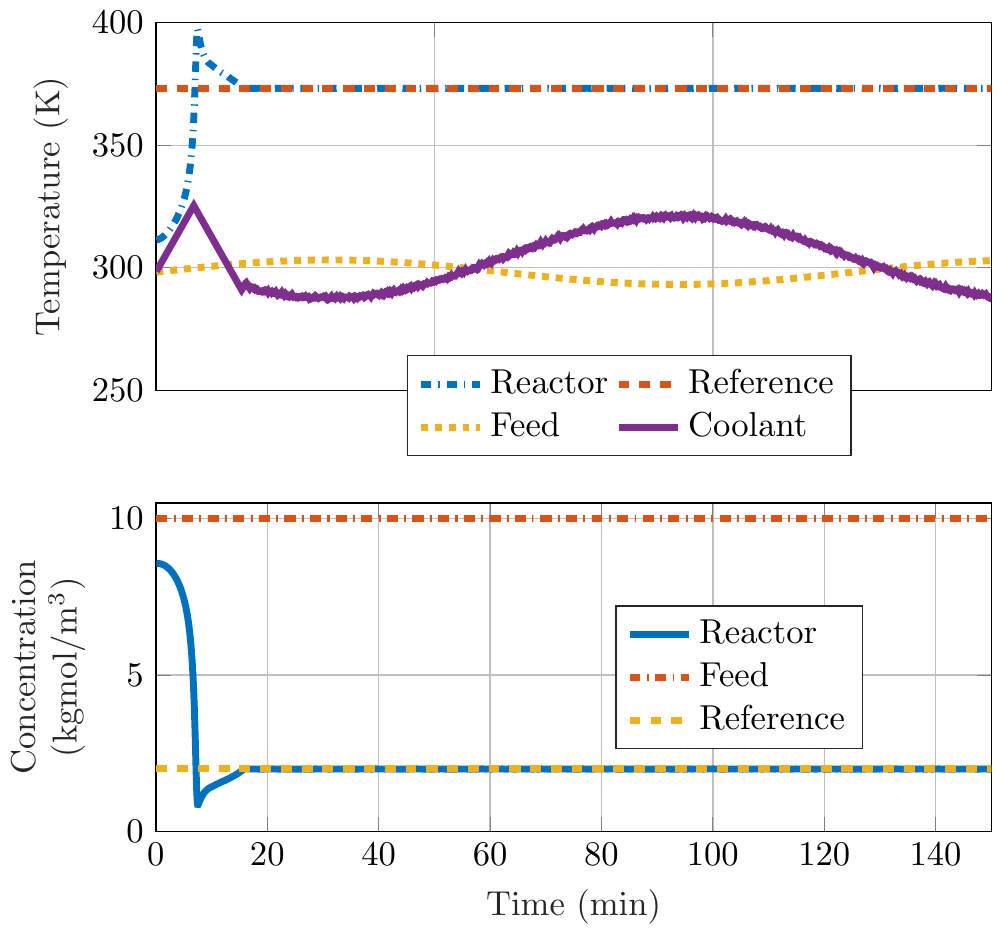}
	\caption{Closed-loop NMPC simulation trajectories of CSTR variables with $N_{\text{p}}=N_{\text{u}}=160$ (16 minutes).} 
	\label{fig:performance}
\end{figure} 
\begin{figure}[b!]
	\centering
	\includegraphics[]{./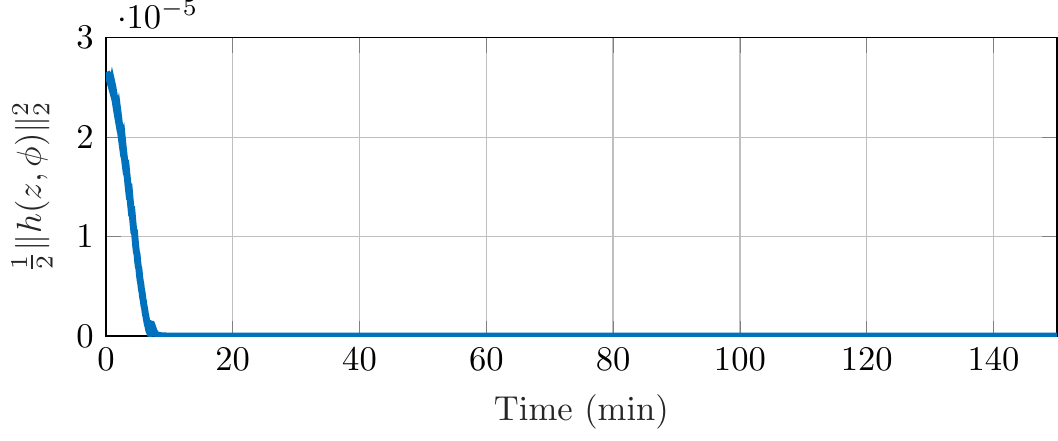}
	\caption{Worst-case equality constraint residuals ($N_{\text{p}}=N_{\text{u}}=160$) for BVNLLS with $\sqrt{\rho}=10^4$ during the simulation described by Figure~\ref{fig:performance}.} 
	\label{fig:conviol}
\end{figure}
Figure~\ref{fig:ctimeSmall} shows that the proposed methods (\emph{custom-sparse}) allow BVNLLS to outperform its \textit{dense} linear algebra based variant even on small-sized test problems by almost an order of magnitude on average. As compared to other solvers which are instead applied to the benchmark formulation~\eqref{eq:nlp}, i.e., the SQP solver (\texttt{fmincon}) of MATLAB and the interior-point solver (IPOPT) of~\cite{ipopt}, a reduction in computational time by around two orders of magnitude is observed for the small-sized test problems. This reduction can be credited to the fact that IPOPT, which is based on sparse \la routines, is more effective for large-sized problems, and that BVNLLS exploits warmstarts based on the previously computed solution which is provided from the second instance onwards. Note that the solvers fmincon/IPOPT were used because they are widely used as benchmarks (see for instance for IPOPT the recent paper~\cite{acados} and its references), and are available for reproducing the results, which also allows one to compare with another approach through the time ratios observed in the referred figures.

Figure~\ref{fig:ctimeLarge} suggests that despite being based on an active-set algorithm, the proposed sparsity-exploiting methods empower BVNLLS to significantly outperform the benchmarks even for large problems.
\begin{figure}[h!]
	\centering
	\includegraphics[]{./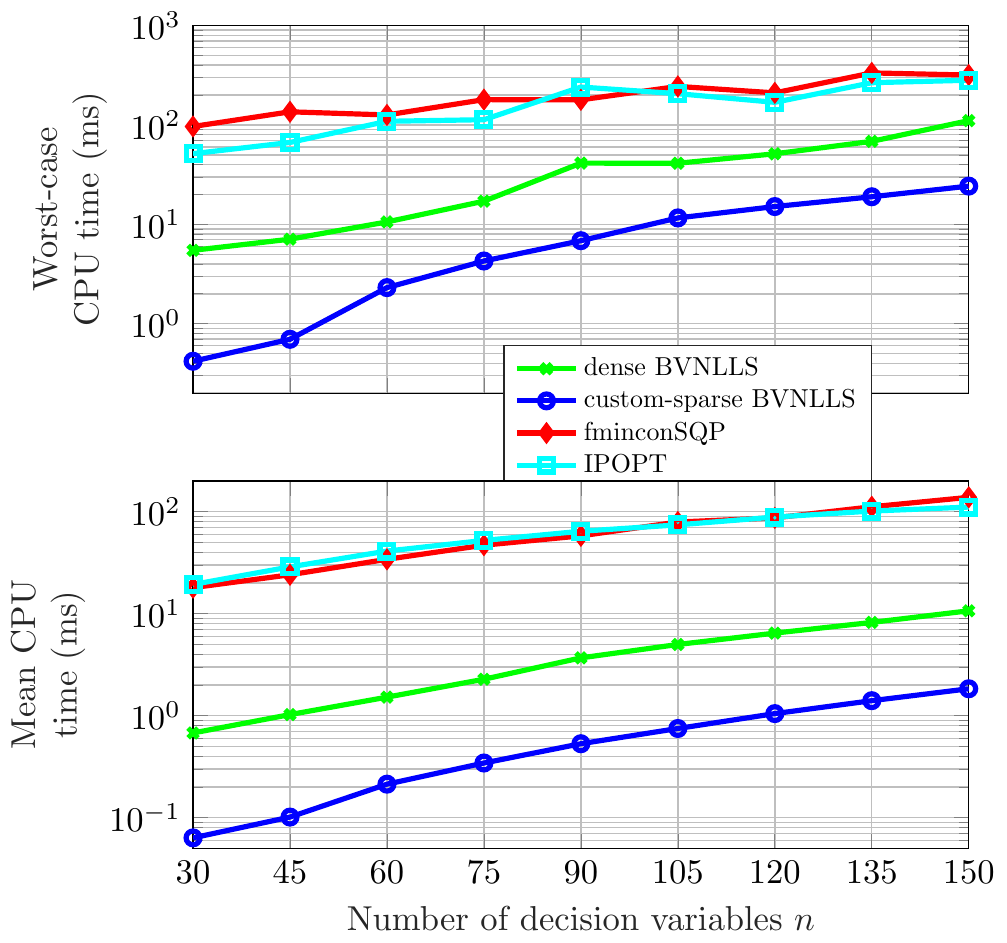}
	\caption{Computational time spent by each solver during NMPC simulation of CSTR~\cite{nmpcbvls} for increasing values of $N_{\mathrm{p}}=N_{\mathrm{u}}=n/3$, $n$ set of box-constraints and $2N_{\mathrm{p}}$ equality constraints.} 
	\label{fig:ctimeSmall}
\end{figure} 
\begin{figure}[h!]
	\centering
	\includegraphics[]{./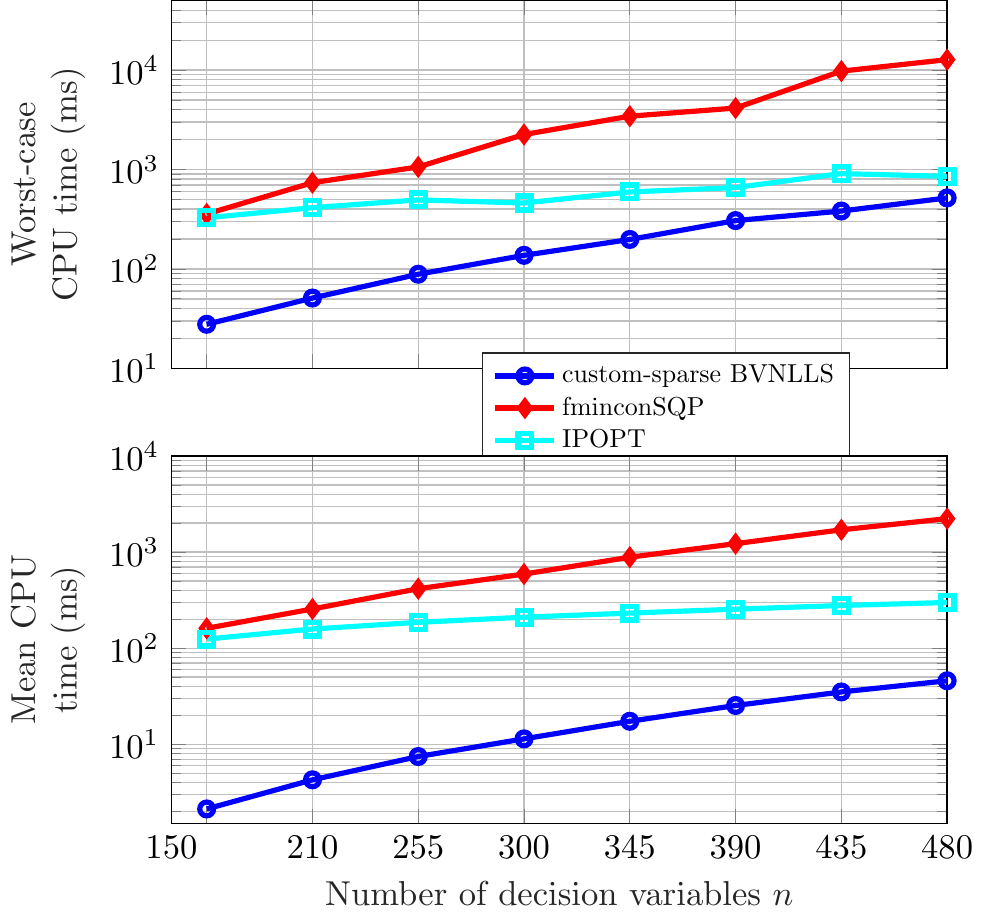}
	\caption{Computational time spent by each solver during NMPC simulation of CSTR for large values of $N_{\mathrm{p}}=N_{\mathrm{u}}=n/3$ with $n$ set of box-constraints and $2N_{\mathrm{p}}$ equality constraints.} 
	\label{fig:ctimeLarge}
\end{figure}

\section{Conclusions}
\label{sec:conclusions}
This paper has presented a new approach to solving constrained linear and nonlinear MPC problems that, by relaxing the equality constraints generated by the prediction model into quadratic penalties, allows the use of a very efficient bounded-variable
nonlinear least squares solver. The linear algebra behind the latter has been specialized in detail to take into account the particular structure of the MPC problem, so that the resulting required memory footprint and throughput are minimized for efficient real-time implementation, without the need of any external advanced linear algebra library. 

\section*{Acknowledgments}
The authors thank Laura Ferrarotti and Mario Zanon (IMT Lucca) and  Stefania Bellavia (University of Florence) for stimulating discussions concerning the convergence of BVNLLS.

\end{document}